\providecommand{\U}[1]{\protect\rule{.1in}{.1in}}
\newtheorem{theorem}{Theorem}
\newtheorem{acknowledgement}[theorem]{Acknowledgement}
\newtheorem{corollary}[theorem]{Corollary}
\newtheorem{definition}[theorem]{Definition}
\newtheorem{lemma}[theorem]{Lemma}
\newtheorem{proposition}[theorem]{Proposition}
\newtheorem{remark}[theorem]{Remark}
\newenvironment{proof}[1][Proof]{\noindent\textbf{#1.} }{\ \rule{0.5em}{0.5em}}
\begin{document}

\title{The differentablity of horizons along their generators}
\author{D. Szeghy\thanks{Email:szeghy@caesar.elte.hu}\\E\"{o}tv\"{o}s University, Hungary, 1117 Budapest, P\'{a}zm\'{a}ny P\'{e}ter
stny. 1/C}
\maketitle

\begin{abstract}
Let $H$ be a (past directed) horizon in a time-oriented Lorentz manifold and
$\gamma:[\left(  \alpha,\beta\right)  \rightarrow H$ a past directed generator
of the horizon, where $[\left(  \alpha,\beta\right)  $ is $[\alpha,\beta)$ or
$\left(  \alpha,\beta\right)  $. It is proved that either at every point of
$\gamma\left(  t\right)  ,~t\in\left(  \alpha,\beta\right)  $ the
differentiability order of $H$ is the same, or there is a so-called
differentiability jumping point $\gamma\left(  t_{0}\right)  ,~t_{0}\in\left(
\alpha,\beta\right)  $ such that $H$ is only differentiable at every point
$\gamma\left(  t\right)  ,~t\in\left(  \alpha,t_{0}\right)  $ but not of class
$C^{1}$ and $H$ is exactly of class $C^{1}$ at every point $\gamma\left(
t\right)  ,~t\in\left(  t_{0},\beta\right)  $. We will use in the proof a
result which shows, that every mathematical horizon in the sense of P. T.
Chru\'{s}ciel locally coincides with a Cauchy horizon.

MSC 53C50

\end{abstract}

\section{Introduction}

There are different objects in physics which are called horizons. A
mathematical definition of horizons was given by P. T. Chru\'{s}ciel in
\cite{C}, see definition \ref{Dhorizon copy(1)} in this paper. This definition
includes e.g. black hole event horizons and Cauchy horizons as well. It turned
out that the set of those points at which the horizon is not differentiable
can not be neglected, for references see e.g. the introduction of \cite{Sz}.
First J. K. Beem and A. Kr\'{o}lak in \cite{B-K} proved that at the interior
points of a generator the horizon is differentiable. Moreover, at the possible
endpoint $p$ of a generator, $H$ is differentiable if and only if $N\left(
p\right)  =1$, i.e. $p$ is the endpoint of only $1$ generator. Later
Chru\'{s}ciel in \cite{C} gave a simpler proof which works for mathematical
horizons. The higher order differentiability properties of the horizon was
studied by the author in \cite{Sz}, it was proved that along a generator the
differentiability order of the horizon at the generator points can change only
once, see theorem \ref{structural} in this paper . If a change happens, then
from simple differentiability it changes to a higher order $C^{k}$, $k\geq1$
differentiability. Our goal is to show that $k=1$. In \cite{Sz} a geometric
characterization was also given to describe the point at which the
differentiability changes with help of non-injectivity points.

First we recall the definitions, known results and some notations. Then give
the proof in the Minkowski $3$-dimensional space-time in which the idea is
easy to follow, then we prove the general case. Some technical details are
presented in the appendix.

\section{Definitions, known results}

We will use the definitions and notations from the book of J. K. Beem et all
\cite{B-E_E} e.g. for the chronological (causal), past (future), causality
relations and their standard notations.

We use the definition in \cite{C}, to define a horizon in the mathematical
sense. Let $\left(  M,g\right)  $ denote a time-oriented Lorentz manifold
throughout the paper.

\begin{definition}
\label{Dhorizon copy(1)}A subset $H\subset M$ is called a \textbf{horizon}
if:\newline$\left(  1\right)  $ it is a topological hypersurface;\newline%
$\left(  2\right)  $ it is achronal;\newline$\left(  3\right)  $ for every
point $p\in H$ there is a past inextendable, past directed, light-like
geodesic $\gamma:[0,\alpha)\rightarrow H$, such that $\gamma\left(  0\right)
=p$.
\end{definition}

An achronal topological hypersurface can be considered as a continuous
function graph, for a precise introduction we refer the reader to \cite{Sz}.
Note that in $\left(  2\right)  $ we could use local achronality, even in that
case the results remain true.

For the sake of simplicity the notation $[\left(  \alpha,\beta\right)  $ will
mean the interval $\left(  \alpha,\beta\right)  $ or the interval
$[\alpha,\beta)$, where $\alpha,\beta\in\mathbb{R}\cup\left\{  -\infty
,\infty\right\}  $

\begin{definition}
\label{Dgener copy(1)}Let $\gamma:[(\alpha,\beta)\rightarrow H$ be a past
directed, past inextendable, light-like geodesic, such that it is future
inextendable on the horizon, i.e. there is no $\varepsilon>0$, such that a
light-like geodesic $\widetilde{\gamma}:\left(  \alpha-\varepsilon
,\beta\right)  \rightarrow H$ exists, for which $\widetilde{\gamma}\left(
t\right)  =\gamma\left(  t\right)  $ for every $t\in\lbrack(\alpha,\beta)$.
Then $\gamma$ is called a \textbf{generator}.
\end{definition}

Thus, a generator can be defined on an open interval $\left(  \alpha
,\beta\right)  $, or on a closed-open one $[\alpha,\beta)$, depending only on
the generator.

Let $N\left(  p\right)  $ denote number of the generators at $p\in H$. The
achronality of $H$ gives that generator cannot intersect or have branching
points exept at their possible common endpoint\footnote{"Endpoint" in the
sense of chronological relations on $\gamma$. By us, the generators are past
directed and the chronological endpoint of $\gamma:\left[  \alpha
,\beta\right)  \rightarrow H$ is $\gamma\left(  \alpha\right)  $ which is the
"starting point" in the sense of parametrization.}, see e.g. \cite{Sz}. Thus
$N\left(  p\right)  =1$ must hold if $p$ is an interior point of of a generator.

\begin{proposition}
\label{Prop_diff=c1}The following are equivalent:

\begin{itemize}
\item $H$ is differentiable at every point of an open set $U\subset H$;

\item $\forall p\in U\subset H,~N\left(  p\right)  =1$;

\item every point $p\in U$ is an interior point of a generator;

\item $H$ is differentiable at least of class $C^{1}$ at every point $p\in U$.
\end{itemize}
\end{proposition}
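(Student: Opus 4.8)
The plan is to deduce the four equivalences from the results of Beem and Kr\'{o}lak quoted in the introduction together with the (local) semiconvexity of horizons, organised as $(4)\Rightarrow(1)\Leftrightarrow(2)$, then $(1)\Rightarrow(4)$, then $(3)\Rightarrow(2)$, and finally $(2)\Rightarrow(3)$. The implication $(4)\Rightarrow(1)$ is trivial. For $(1)\Leftrightarrow(2)$ I would argue as follows: by axiom $(3)$ of a horizon every $p\in H$ lies on a past inextendable light-like geodesic in $H$, and extending it maximally we may assume $p$ lies on a generator, where $p$ is either an interior point or the future endpoint. Interior points always satisfy $N(p)=1$ by achronality and $H$ is differentiable there, while at a future endpoint $H$ is differentiable if and only if $N(p)=1$, by Beem--Kr\'{o}lak \cite{B-K}. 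Hence differentiability of $H$ at every point of $U$ forces $N\equiv 1$ on $U$ (whichever of the two cases a given point falls into), and conversely $N\equiv 1$ on $U$ forces differentiability at every point of $U$.

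For $(1)\Rightarrow(4)$ I would use that, locally, $H$ is the graph of a semiconvex function; this follows from the result announced in the abstract (locally $H$ coincides with a Cauchy horizon) together with the known semiconvexity of Cauchy horizons. For a semiconvex function the set of differentiability points coincides with the set of continuity points of the gradient, on which the gradient is continuous; so if the graphing function is differentiable at every point of the open set corresponding to $U$, it is of class $C^1$ there, i.e. $H$ is $C^1$ on $U$. Combined with the first paragraph this gives $(1)\Leftrightarrow(2)\Leftrightarrow(4)$. The implication $(3)\Rightarrow(2)$ is immediate from achronality: generators do not branch or meet except at a common endpoint, so no second generator can pass through an interior point of a generator, whence $N(p)=1$.

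It remains to prove $(2)\Rightarrow(3)$, which I regard as the heart of the statement. Assume $(2)$; by the above, $H$ is $C^1$ on $U$. At each $p\in U$ the tangent hyperplane $T_pH$ contains the null tangent to the generator through $p$ and, $H$ being achronal, no timelike vector, so $T_pH$ is a null hyperplane; hence the null line in $T_pH$ varies continuously with $p\in U$, giving a continuous line field $L$ on $U$. Any integral curve of $L$ is a locally achronal light-like curve contained in $H$, hence, up to reparametrisation, a null geodesic, i.e. a piece of a generator. By Peano's existence theorem, through every $p\in U$ there passes an integral curve of $L$ defined on an open parameter interval about the value it takes at $p$ and remaining in $U\subset H$; extending the associated null geodesic to a past inextendable geodesic inside $H$ that is future inextendable in $H$ yields a generator having $p$ in its open interior. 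Thus every point of $U$ is interior to a generator, which is $(3)$.

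The step I expect to be the main obstacle is exactly $(2)\Rightarrow(3)$: one must legitimately pass from the pointwise hypothesis ``$H$ differentiable at each point of $U$'' to genuine $C^1$ regularity on $U$ — for which semiconvexity, and hence the structural input that $H$ is locally a Cauchy horizon, appears indispensable — and then verify with care that the integral curve furnished by Peano's theorem is a reparametrised null geodesic and that its maximal extension inside $H$ is a generator in the sense of the definition, so that $p$ really sits in its open interior and not at an endpoint. The remaining implications $(4)\Rightarrow(1)$, $(1)\Leftrightarrow(2)$ and $(3)\Rightarrow(2)$ are, by contrast, soft consequences of the quoted Beem--Kr\'{o}lak results and of achronality.
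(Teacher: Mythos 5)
Your proposal is essentially sound, but it takes a genuinely different route from the paper: the paper disposes of this proposition with a one-line citation to Proposition 3.3 of Beem--Kr\'{o}lak \cite{B-K}, remarking only that the proof there adapts to mathematical horizons. You instead reassemble the statement from three ingredients: (i) the pointwise Beem--Kr\'{o}lak/Chru\'{s}ciel characterization (differentiability at $p$ is equivalent to $N(p)=1$, since interior points of generators automatically have $N=1$ by achronality), which immediately gives the equivalence of the first two items; (ii) semiconvexity of horizons, combined with the convex-analysis fact that a (semi)convex function differentiable at every point of an open set has continuous gradient there, to upgrade pointwise differentiability to $C^1$ --- note that your use of lemma \ref{Lmodel} here is legitimate (its proof is independent of the proposition, so there is no circularity), but the semiconvexity of Cauchy horizons is itself a nontrivial external theorem (Chru\'{s}ciel--Delay--Galloway--Howard) that appears nowhere in this paper's bibliography, so it should be cited explicitly; and (iii) a continuous null line field/Peano argument for the implication from $N\equiv 1$ to ``every point is interior to a generator,'' whose key verifications are that each tangent hyperplane of the achronal $C^1$ hypersurface is null and hence carries a unique null direction (this uniqueness is also what forces the integral-curve geodesic to agree with the generator through $p$, so that $p$ cannot be a future endpoint of a future-maximal generator), and that an achronal causal curve is a null pregeodesic. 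Your route buys a self-contained and geometrically transparent argument, essentially reconstructing the content hidden behind the citation; the paper's route is shorter but opaque, deferring everything --- including the open-set upgrade from mere differentiability to $C^1$, which is the real content of the fourth item --- to \cite{B-K}.
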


\begin{proof}
See, proposition 3.3. in \cite{B-K}, where the proof also works for
mathematical horizons.
\end{proof}

We recall shortly what it means that $H$ is differentiable of class $C^{k}$ at
a point $p\in H$, the precise description can be found in \cite{Sz} \ at the
beginning of the section: \textit{The differentiability order of horizons}.
Let $N$ be a topological submanifold in the smooth manifold $M$. It is
differentiable of class $C^{k}$ at $p\in N$ if there is a suitable smooth
coordinate chart in a neighbourhood of $p\in M$ such that the coordinate
functions of $N$ are $k$ times differentiable at $p$ and the $k$-th
derivatives are continuous at $p.$ It means that in a neighbourhood of $p$ the
$\left(  k-1\right)  $th derivatives of the coordinate functions are
continuous, but the continuity of the $k$-th derivatives are guaranteed only
at $p$. The differentiability class is exactly of class $C^{k}$ if it is off
class $C^{k}$ but not of class $C^{k+1}$.

\begin{theorem}
\bigskip\label{structural}For every generator $\gamma:[\left(  \alpha
,\beta\right)  \rightarrow H$ a unique parameter $t_{0}\in\lbrack\alpha
,\beta]$ exists, where $\alpha,~\beta\in\mathbb{R\cup}\left\{  -\infty
,\infty\right\}  $, such that there is a $k\geq1$ for which\newline$\left(
1\right)  $ $H$ is exactly of class $C^{k}$ at every $\gamma\left(  t\right)
$, for which $t>t_{0};$\newline$\left(  2\right)  $ $H$ is differentiable, but
not of class $C^{1}$ at every $\gamma\left(  t\right)  $, for which
$t\in(\alpha,t_{0}];$\newline$\left(  3\right)  $ $H$ is differentiable at the
possible endpoint $\gamma\left(  \alpha\right)  $ if and only if $N\left(
\gamma\left(  \alpha\right)  \right)  =1$.
\end{theorem}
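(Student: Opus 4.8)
The plan is to follow the null generator $\gamma$ using the local coincidence of $H$ with a Cauchy horizon. First I would invoke that result: for every interior $t$ there is a neighbourhood of $\gamma(t)$ in $M$ in which $H$ agrees with a past Cauchy horizon $H^-(S)$ of a smooth spacelike hypersurface $S$; hence, in suitable coordinates, $H$ is locally the graph of a semi-concave function $u$ (an infimum of the smooth ``time'' functions carried by the past light cones issuing from $S$), and $\gamma$ projects to a linearity segment of $u$ along which $u$ is affine, the tangent plane $T_{\gamma(t)}H$ (which exists by Beem--Kr\'olak) depending continuously on $t$.

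The heart of the argument is the evolution along $\gamma$ of the (generalised, support-function) null Weingarten operator $b(t)$ of $H$, which has $\dot\gamma$ in its kernel. Semi-concavity gives a uniform \emph{upper} bound $b(t)\le C\cdot\mathrm{Id}$; on the regular part $b$ satisfies a Riccati equation $\dot b+b^{2}+\mathcal R(t)=0$ along $\gamma$, and in the Minkowski model the Jacobi fields are affine, so $b$ is given on its interval of finiteness by the explicit matrix M\"obius formula $b(t)=\bigl(b(t_{1})^{-1}+(t-t_{1})\,\mathrm{Id}\bigr)^{-1}$, which makes everything transparent. From this I would extract the dichotomy: the set of $t$ at which $b(t)$ is finite is an interval $(t_{0},\beta)$ open toward the past; at its lower end a finite-parameter blow-up of $b$ (necessarily an eigenvalue tending to $-\infty$, by the upper bound) is exactly a focal point of the horizon congruence, which would let one deform $\gamma$ off the achronal set $H$ beyond that parameter, so $\gamma$ would acquire a future endpoint there. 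Since a generator is future-inextendable on $H$, its only admissible termination is $\gamma(\alpha)$; hence there is a unique $t_{0}\in[\alpha,\beta]$ with $b$ blowing up on $(\alpha,t_{0}]$ and finite on $(t_{0},\beta)$.

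It remains to translate this into differentiability orders. On $(\alpha,t_{0}]$, the blow-up of $b$ produces in every neighbourhood of $\gamma(t)$ points of $H$ lying on at least two generators (a fold of the characteristic family of $u$), so by Proposition \ref{Prop_diff=c1} $H$ is not $C^{1}$ at $\gamma(t)$, while Beem--Kr\'olak keeps it differentiable; this is $(2)$. On $(t_{0},\beta)$, finiteness of $b$ means the generators near $\gamma$ form a foliation that is at least $C^{1}$, so $H$ is at least $C^{1}$ near each $\gamma(t)$ by Proposition \ref{Prop_diff=c1}; bootstrapping the Riccati and structure equations, the exact differentiability order of $H$ at $\gamma(t)$ equals the regularity order of that foliation, and since the foliation is transported by the geodesic spray along the whole of $(t_{0},\beta)$ with the same data, this order cannot jump a second time and equals a single $k\ge 1$, which is $(1)$. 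Finally $(3)$ is the endpoint case of Proposition~3.3 of \cite{B-K}: at $\gamma(\alpha)$ the limit $\lim_{t\to\alpha^{+}}T_{\gamma(t)}H$ is a genuine tangent plane iff $\gamma(\alpha)$ is the future endpoint of no other generator, i.e.\ $N(\gamma(\alpha))=1$.

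I expect the main obstacle to be the step asserting that the exact order is one and the same constant $k$ on all of $(t_{0},\beta)$ --- that once $H$ has become $C^{1}$ it has already attained its final regularity --- since this requires controlling how the regularity of the generator foliation is propagated by the spray and ruling out a further upgrade along the geodesic. The second, technical, obstacle is making the Riccati comparison rigorous in the merely semi-concave (not $C^{2}$) regime, where one must work with the generalised Weingarten operator and its support-function formulation; this is the material I would defer to the appendix.
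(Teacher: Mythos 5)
The paper does not reprove this theorem at all: its ``proof'' is a citation to the structure theorem of \cite{Sz}, so any self-contained argument has to carry the full weight of that paper. Your proposal does not yet do so. The decisive content of the statement is $(1)$ together with the uniqueness of the jump: that on the whole interval $(t_{0},\beta)$ the horizon is of \emph{one and the same exact} class $C^{k}$, i.e.\ the regularity can change at most once along the generator and never upgrades a second time. Your argument for this is the sentence about ``bootstrapping the Riccati and structure equations'' and the regularity of the generator foliation being ``transported by the geodesic spray,'' which is an expectation, not a proof: you neither prove that the exact differentiability order of $H$ at $\gamma(t)$ equals the regularity order of the nearby foliation of generators (only the easy implication, foliation regularity $\Rightarrow$ horizon regularity, is plausible without work), nor do you rule out that this order increases at some later parameter. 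You yourself flag this as the main obstacle, so the central claim of the theorem is left open.

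Two further steps are asserted rather than established. First, the link between blow-up of the (generalised) Weingarten operator $b$ along $\gamma$ and failure of $C^{1}$-ness: by Proposition \ref{Prop_diff=c1}, non-$C^{1}$ at $\gamma(t)$ means points with $N\geq 2$ accumulate at $\gamma(t)$, and your claim that a blow-up of $b$ ``produces in every neighbourhood of $\gamma(t)$ points of $H$ lying on at least two generators'' (and conversely that finiteness of $b$ on $(t_{0},\beta)$ yields a $C^{1}$ foliation) is exactly the nontrivial equivalence one must prove in the merely semi-concave regime, where $b$ is not classically defined on the bad segment at all; the monotone structure of the set $\{t:\,b(t)\ \text{finite}\}$ also needs an argument there. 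Second, the explicit M\"obius formula $b(t)=\bigl(b(t_{1})^{-1}+(t-t_{1})\,\mathrm{Id}\bigr)^{-1}$ is a Minkowski identity; the local model furnished by the Cauchy-horizon lemma only makes the metric \emph{close} to Minkowski, so the curvature term $\mathcal{R}(t)$ does not vanish and you would need genuine comparison/Riccati estimates rather than the flat formula. Part $(3)$, via Beem--Kr\'olak, and the non-existence of interior focal points are fine, but as it stands the proposal proves (modulo the semi-concavity input) only the easy portion of the theorem and leaves the structural dichotomy with a single exact $k$ unproven.
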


\begin{proof}
See, in \cite{Sz} Theorem (Structure of the generators).
\end{proof}

\begin{definition}
Let $\gamma:[\left(  \alpha,\beta\right)  \rightarrow H$ be a generator and
assume that in the above theorem \ref{structural} defined point $t_{0}$ is not
equal to $\alpha$ or $\beta$. Then $\gamma\left(  t_{0}\right)  $ is called
the \textbf{differentiability jumping point of }$\gamma$.
\end{definition}

Now the main goal of this paper is to sharpen the above theorem, thus

\begin{theorem}
\label{Tmain}If on a generator $\gamma:[\left(  \alpha,\beta\right)
\rightarrow H$ the differentiability jumping point exists i.e. $t_{0}%
\in\left(  \alpha,\beta\right)  $, then in theorem \ref{structural}, $k=1$.
\end{theorem}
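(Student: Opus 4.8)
The idea is to analyze the geometry near the differentiability jumping point $\gamma(t_0)$. By Theorem~\ref{structural}, for $t > t_0$ the horizon is exactly $C^k$ at $\gamma(t)$ for some fixed $k \ge 1$, and we must show $k$ cannot exceed $1$. I would argue by contradiction: assume $k \ge 2$. The key structural fact to exploit is that $\gamma(t_0)$ is a non-injectivity (or branching/multiplicity) point --- it is where $N(\gamma(t)) = 1$ stops holding as $t$ decreases through $t_0$, or at least where the generator structure degenerates --- combined with the result advertised in the abstract that locally $H$ coincides with a Cauchy horizon. Passing to a Cauchy horizon model lets me use the explicit description of Cauchy horizons as boundaries $\partial^+ D(S)$ and the associated normal/geodesic structure.

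First I would set up local coordinates adapted to $\gamma$ near $\gamma(t_0)$: write $H$ locally as a graph $x^0 = f(x^1,\dots,x^{n-1})$ of a continuous function, semi-convex (or semi-concave, depending on time orientation) as is standard for horizons. In these coordinates the generators are (nearly) the segments along which $f$ is ``linear to highest order'', and the second-derivative structure of $f$ --- via the Hessian measure of the semi-convex function --- encodes the differentiability order. The point $\gamma(t)$ for $t>t_0$ being $C^k$ with $k\ge 2$ means $f$ has a genuine second derivative there that varies $C^{k-2}$; in particular the Hessian $D^2 f$ is defined in a neighbourhood along the generator for $t>t_0$ and is continuous up to $\gamma(t_0)$ (that is what ``exactly $C^k$ at $\gamma(t_0)$'' fails to give, but $C^k$ on the open part $t>t_0$ gives at least a one-sided limit). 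I would then use the focusing/Raychaudhuri-type propagation equation for the null generators: along a null geodesic generator of a horizon the null second fundamental form $b$ (shape operator of the horizon transverse to $\gamma$) satisfies a Riccati equation $b' + b^2 + R = 0$. Solutions of a Riccati equation that stay finite for $t$ slightly greater than $t_0$ but blow up (or lose a derivative) exactly at $t_0$ behave like $b(t) \sim \frac{1}{t - t_0}\,\mathrm{Id} + O(1)$ near $t_0$ --- this is the universal local form of a Riccati blow-up.

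Here is the crux. If $H$ were $C^2$ (hence $k\ge 2$) at points $\gamma(t)$ with $t>t_0$ and the shape operator extended continuously (one-sidedly) to $t_0$, then $b$ would be bounded near $t_0^+$, and standard ODE continuation for the Riccati equation would let me extend $b$ as a bounded solution slightly past $t_0$ into $t<t_0$; this forces $H$ to remain $C^1$ (indeed differentiable with continuously varying tangent) for $t$ slightly less than $t_0$, contradicting Theorem~\ref{structural}(2), which says $H$ is \emph{not} $C^1$ at $\gamma(t)$ for $t \le t_0$ near $t_0$. Conversely, the non-$C^1$ behaviour just below $t_0$ is exactly the signature of the Riccati blow-up $b \sim (t-t_0)^{-1}$, and one checks that a $(t-t_0)^{-1}$ singularity in the shape operator is compatible only with the horizon being $C^1$ but not $C^2$ on the side $t>t_0$: the once-integrated quantity (the tangent plane) is Hölder/continuous, giving $C^1$, but the second derivative picks up a $\log$ or a jump, killing $C^2$. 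So $k=1$.

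**Main obstacle.** The hard part will be making the Riccati/shape-operator argument rigorous when $H$ is only assumed $C^1$ (not $C^2$) on the good side: the ``null second fundamental form'' is then not classically defined, and one must work with $H$ as a semi-convex function and interpret $b$ via Alexandrov second derivatives / the distributional Hessian, controlling it along almost every generator and then upgrading to the specific generator $\gamma$. This is presumably where the reduction to a Cauchy horizon (with its cleaner $D^+(S)$ structure and the known regularity theory of Cauchy horizons) does the real work, and where the Minkowski $3$-dimensional warm-up in the paper isolates the mechanism --- there the Riccati equation is scalar ($b' + b^2 = 0$, since $R=0$), its only finite-time singular solution is $b(t) = 1/(t-t_0)$, and one can see by hand that this corresponds to a horizon that is $C^{1,1}$ but not $C^2$ on one side and genuinely non-differentiable-in-the-$C^1$-sense on the other. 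Promoting this explicit picture to a general time-oriented Lorentz manifold, and ruling out that curvature terms could conspire to soften the singularity enough to allow $k\ge 2$, is the technical heart of the proof.
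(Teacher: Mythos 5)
Your overall framing (argue by contradiction assuming $k\ge 2$, reduce locally to a Cauchy horizon, exploit the non-injectivity structure at the jump) matches the paper's, but the engine of your argument --- propagation of the null second fundamental form $b$ along $\gamma$ via the Riccati/Raychaudhuri equation --- does not prove the statement, and the crucial step is not just technically hard but wrong in substance. The failure of $C^{1}$ at $\gamma(t)$, $t\le t_{0}$, is \emph{not} the signature of a Riccati blow-up along $\gamma$: by Proposition \ref{Prop_diff=c1}, non-$C^{1}$ at $\gamma(t^{p})$ is equivalent to the existence of points $q_{n}\rightarrow\gamma(t^{p})$ with $N(q_{n})\ge 2$, i.e.\ to an accumulation of endpoints of \emph{other} generators (a cut-locus/crease phenomenon), which can occur while the Weingarten map along $\gamma$ itself stays perfectly bounded through $t_{0}$. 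Consequently your key implication ``$b$ bounded as $t\rightarrow t_{0}^{+}$, hence extendable past $t_{0}$, hence $H$ is $C^{1}$ just below $t_{0}$'' has no mechanism behind it: the regularity of $H$ below $t_{0}$ is governed by the presence or absence of nearby generator endpoints, which the ODE along the single generator $\gamma$ cannot detect. The converse claim is also false: a genuine blow-up $b\sim (t-t_{0})^{-1}$ is a focal-point phenomenon and is compatible with the horizon being smooth at every point $\gamma(t)$, $t>t_{0}$ --- e.g.\ the Cauchy horizon of a round disc in Minkowski space is a cone, $C^{\infty}$ at every interior generator point even though $b$ blows up at the vertex --- so a singularity of $b$ at $t_{0}$ would not force $k=1$ on the side $t>t_{0}$ either. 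In short, you name the non-injectivity points as ``the key structural fact'' but your actual argument never uses them, and without them the statement cannot be reached.

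For comparison, the paper's proof uses exactly those points: it fixes $t^{f}<t^{p}<t_{0}<t^{-}$, takes the sequence $q_{n}\rightarrow\gamma(t^{p})$ with $N(q_{n})\ge 2$, and follows the two generators from $q_{n}$ until they hit a spacelike cross-section $R$ through $\gamma(t^{-})$ at two points $p_{n}^{1},p_{n}^{2}\rightarrow\gamma(t^{-})$. The slice $H^{+}(S)\cap R$ is tangent at these two points to the nearly spherical, highly curved boundary $\partial\left(J^{-}(q_{n})\cap R\right)$ (high curvature because $\gamma(t^{p})$ is close to $R$), so an elementary sandwich lemma (Lemma \ref{LAgetcurvature}) produces points $r_{n}$ of the horizon slice with curvature at least $\kappa+1$; the assumed $C^{2}$-regularity at $\gamma(t^{-})$ transports this lower curvature bound to $\gamma(t^{-})$ itself, whereas the slice is also tangent there to the large, low-curvature boundary $\partial\left(J^{-}(\gamma(t^{f}))\cap R\right)$, forcing a horizon point inside $I^{-}(\gamma(t^{f}))$ and contradicting achronality. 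The two sides of the jump are thus played against each other through the achronality of $H$ and a purely two-dimensional curvature comparison, not through an evolution equation along $\gamma$; this is the idea your proposal is missing.
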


An example in \cite{Sz} is given where the differentiability jumping point exists.

For the proof we will need the definition of Cauchy horizons, see p. 419
definition 35 in \cite{O'N}

\begin{definition}
\label{defCouchy}Let $S$ be an achronal subset in a time oriented Lorentz
manifold $\left(  M,g\right)  $. The \textbf{future Cauchy development} of $S$
is%
\[
D^{+}\left(  S\right)  \overset{def}{=}\left\{  z\in M\,|\,\text{every past
directed inextendable causal curve from }z\text{ intersects }S\right\}  ;
\]
and the \textbf{future Cauchy horizon} of $S$ is%
\[
H^{+}\left(  S\right)  \overset{def}{=}\partial D^{+}\left(  S\right)
-\overline{S}.
\]

\end{definition}

It is known that $H^{+}\left(  S\right)  $ is a horizon in the Lorentz
manifold $\left(  M\backslash\overline{S},g\right)  $, where $\overline{S}$ is
the closure of $S$.

The main idea of the proof of theorem \ref{Tmain} is presented first in a
Minkowski $3$-space in case of a Cauchy horizon because it is easier to
understand the idea of the proof in this particular case. Then we will show
how to use this idea in the general case.%

\begin{figure}[ptb]%
\centering
\includegraphics[
height=3.2456in,
width=5.2217in
]%
{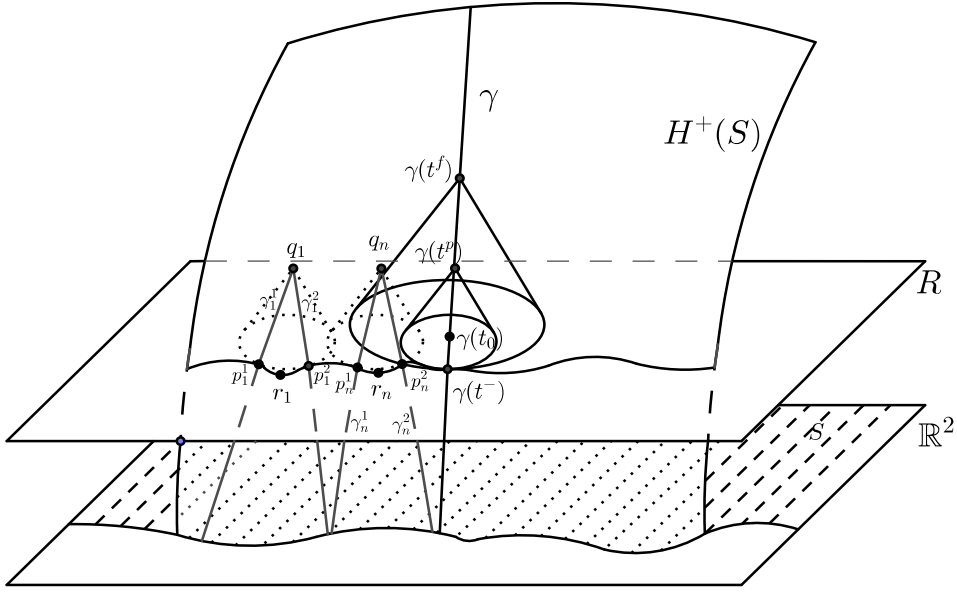}%
\caption{The points $r_{n}\in H^{+}\left(  S\right)  \cap R$ with big
curvature converge to $\gamma\left(  t^{-}\right)  $. }%
\label{F1}%
\end{figure}

Let $\left(  \mathbb{M}^{3},g_{M}\right)  $ be the standard Minkowski space
with the orthonormal base $e_{1},~e_{2},~e_{3},$ where $e_{1},~e_{2}$ are
space-like and $e_{3}$ is time-like. Consider $\mathbb{R}^{2}$ the space-like
linear subspace spanned by $e_{1}$ and $e_{2}$. Let $S\subset\mathbb{R}^{2}$
be an open set. Let $\gamma:$ $[\left(  \alpha,\beta\right)  \rightarrow
H^{+}\left(  S\right)  $ be a past directed generator in $H^{+}\left(
S\right)  $ and assume that $\gamma\left(  t_{0}\right)  $ is a
differentiability jumping point, see \textsc{figure \ref{F1}}. From theorem
\ref{structural} we know that $H^{+}\left(  S\right)  $ is only differentiable
at the points of $\gamma|_{\left(  \alpha,t_{0}\right]  }$ and it is of class
$C^{k},~k\geq1$ at the points of $\gamma|_{\left(  t_{0},\beta\right)  }$. By
an indirect assumption, assume that $k\geq2$. Let $t^{-}>t_{0}$ and $R$ an
affine subspace at $\gamma\left(  t^{-}\right)  $, which is parallel to
$\mathbb{R}^{2}$, see \textsc{figure \ref{F1}}. Since $H^{+}\left(  S\right)
$ is at least $C^{2}$ at $\gamma\left(  t^{-}\right)  $ therefore the curve
$\theta\overset{def}{=}R\cap H^{+}\left(  S\right)  $ is at least $C^{2}$ at
$\gamma\left(  t^{-}\right)  $. Let $t^{f}<t^{p}<t_{0}$, then $\partial\left(
J^{-}\left(  \gamma\left(  t^{f}\right)  \right)  \cap R\right)
,~\partial\left(  J^{-}\left(  \gamma\left(  t^{p}\right)  \right)  \cap
R\right)  $ are circles on $R$ touching $\theta$ at $\gamma\left(
t^{-}\right)  $ with curvatures $0<\kappa^{f}$ $<\kappa^{p}$ where the
curvature is the common curvature for $\mathbb{R}^{2}$ curves. Since
$H^{+}\left(  S\right)  $ is only differentiable, but not of class $C^{1}$ at
$\gamma\left(  t^{p}\right)  $, by proposition \ref{Prop_diff=c1}, there must
be a sequence $q_{n}\in H^{+}\left(  S\right)  $, $q_{n}\rightarrow
\gamma\left(  t^{p}\right)  $ with $N\left(  q_{n}\right)  \geq2$. Thus, there
are generators $\gamma_{n}^{1},~\gamma_{n}^{2}$ starting at $q_{n}$ which
intersects $\theta$ at $p_{n}^{1},~p_{n}^{2}$. As $q_{n}\rightarrow
\gamma\left(  t^{p}\right)  $ we have that the curvature $\kappa_{n}$ of the
circles $\partial\left(  J^{-}\left(  q_{n}\right)  \cap R\right)  $ converges
to $\kappa^{p}$. Since $\left(  I^{-}\left(  q_{n}\right)  \cap R\right)
\subset D^{+}\left(  S\right)  $ and $p_{n}^{1},~p_{n}^{2}\in\theta
\subset\partial D^{+}\left(  S\right)  $ we have that $\theta$ and the circles
$\partial\left(  J^{-}\left(  q_{n}\right)  \cap R\right)  $ are tangent at
$p_{n}^{1}$ and at $p_{n}^{2}$. The curve $\theta$ is differentiable (at
least) twice in a neighbourhood of $\gamma\left(  t^{-}\right)  $, however we
know only that the second derivatives are continuous $\gamma\left(
t^{-}\right)  $. Nevertheless we can calculate formally the curvature of
$\theta$ in the neighbourhood of $\gamma\left(  t^{-}\right)  $, see
definition \ref{Dref} in the appendix. \ Since the limit of generators is a
generator and $N\left(  \gamma\left(  t^{p}\right)  \right)  =1$ we have that
$\gamma_{n}^{i}\rightarrow\gamma|_{\left[  t^{-},\beta\right)  },~i=1,2$. This
follows that $p_{n}^{1},~p_{n}^{2}\rightarrow\gamma\left(  t^{-}\right)  $.
Therefore, we can apply lemma \ref{LAgetcurvature} to the segment of $\theta$
between $p_{n}^{1}$ and $p_{n}^{2}$ and to the segment of the circle between
$p_{n}^{1}$ and $p_{n}^{2}$. That lemma gives a point $r_{n}$ on the segment
$\theta|_{p_{n}^{1},p_{n}^{2}}$ at which the curvature $\kappa_{\theta}\left(
r_{n}\right)  $ of $\theta$ is at least the curvature $\kappa_{n}$ of the
circle $\partial\left(  J^{-}\left(  q_{n}\right)  \cap R\right)  $. As
$\theta$ is $C^{2}$ at $\gamma\left(  t^{-}\right)  $ we have that
$\kappa_{\theta}\left(  r_{n}\right)  \rightarrow\kappa_{\theta}\left(
\gamma\left(  t^{-}\right)  \right)  $, where $\kappa_{\theta}\left(
\gamma\left(  t^{-}\right)  \right)  $ is the curvature of $\theta$ at
$\gamma\left(  t^{-}\right)  $. Now, since $\kappa_{\theta}\left(
r_{n}\right)  \geq\kappa_{n}$ and $\kappa_{n}\rightarrow\kappa^{p}$ we have
that
\[
\kappa_{\theta}\left(  \gamma\left(  t^{-}\right)  \right)  \geq\kappa
^{p}>\kappa^{f}.
\]
By remark \ref{LAbigcurvature} we have that in this case $\theta$ must
intersect the interior of the disc $J^{-}\left(  \gamma\left(  t^{p}\right)
\right)  \cap R$, which is in $I^{-}\left(  \gamma\left(  t^{p}\right)
\right)  \cap R.$ Thus there is a point $x\in\theta\subset H^{+}\left(
S\right)  \cap R$ in $I^{-}\left(  \gamma\left(  t^{p}\right)  \right)  \cap
R$ which contradicts the achronality of $H^{+}\left(  S\right)  $, since
$x$,$~\gamma\left(  t^{f}\right)  \in H^{+}\left(  S\right)  $ can not be
chronologically related.

In the general case we use a similar proof where the steps of the proof are
the following. We will use the above defined notations in these steps.

\begin{enumerate}
\item Since we examine the horizon locally at the differentiability jumping
point, we can chose a suitably small neighbourhood $U$ of $\gamma\left(
t_{0}\right)  $ and a coordinate system on it, such that the restricted
smaller Lorentz manifold $\left(  U.g|_{U}\right)  $ can be considered as a
ball $\mathbb{B}^{n}\subset\mathbb{R}^{n}$, endowed with a Lorentz metric $g $
which is suitably close to a Minkowski metric.

\item There is a space-like affine hyperplane $\mathbb{R}^{n-1}\subset
\mathbb{B}^{n}$ and a set $S\subset\mathbb{R}^{n-1}$ for which the horizon at
$\gamma\left(  t_{0}\right)  $ locally is the same as the Cauchy horizon
$H^{+}\left(  S\right)  $, see lemma \ref{Lmodel}. Thus it is enough to prove
the theorem for Cauchy horizons, i.e. we will assume that $H=H^{+}\left(
S\right)  $.

\item Therefore the intersections $\partial\left(  J^{-}\left(  \gamma\left(
t^{f}\right)  \right)  \cap R\right)  ,\dots$ are not circles, because of the
higher dimension, nor spheres, but can be arbitrary close to spheres, if $U$
was small enough.

\item Let $P\subset R\simeq\mathbb{R}^{n-1}$ be a $2$-dimensional normal plane
of $\partial\left(  J^{-}\left(  \gamma\left(  t^{f}\right)  \right)  \cap
R\right)  $ at $\gamma\left(  t^{-}\right)  $, i.e. $\gamma\left(
t^{-}\right)  \in P$ and $P$ is parallel to the normal\footnote{We will fix an
arbitrary Euclidean inner product on $R\approx\mathbb{R}^{n-1}$ to be able to
speak about normal directions.} vector of $\partial\left(  J^{-}\left(
\gamma\left(  t^{f}\right)  \right)  \cap R\right)  $ at $\gamma\left(
t^{-}\right)  $. The curve $\partial\left(  J^{-}\left(  \gamma\left(
t^{f}\right)  \right)  \cap R\right)  \cap P$ is called a normal curve of
$\partial\left(  J^{-}\left(  \gamma\left(  t^{f}\right)  \right)  \cap
R\right)  $ at $\gamma\left(  t^{-}\right)  $. If $\gamma\left(  t^{p}\right)
$ is much closer to $R$, i.e. to $\gamma\left(  t^{0}\right)  $, than
$\gamma\left(  t^{f}\right)  $ then there is a constant $\kappa$ such that the
curvature at $\gamma\left(  t^{-}\right)  $ of any\ normal curve of
$\partial\left(  J^{-}\left(  \gamma\left(  t^{f}\right)  \right)  \cap
R\right)  $ at $\gamma\left(  t^{-}\right)  $ is less than $\kappa$ and the
curvature at $\gamma\left(  t^{-}\right)  $ of any\ normal curve of
$\partial\left(  J^{-}\left(  \gamma\left(  t^{p}\right)  \right)  \cap
R\right)  $ at $\gamma\left(  t^{-}\right)  $ is much bigger that $\kappa+1$.

\item There is a suitable fixed point $o\in R$ for which the $2$-dimensional
planes $P_{n}\subset R\simeq\mathbb{R}^{n-1}$ defined by $O,~p_{n}^{1}%
,~p_{n}^{2}$ (i.e. $O,~p_{n}^{1},~p_{n}^{2}\in P_{n}$) will have a suitable
subsequent which will converge to a normal plane $P$ of $\partial\left(
J^{-}\left(  \gamma\left(  t^{f}\right)  \right)  \cap R\right)  $ at
$\gamma\left(  t^{-}\right)  $.

\item The intersections $\partial\left(  J^{-}\left(  q_{n}\right)  \cap
R\right)  \cap P_{n}$ are curves which are not circles but have curvatures
much bigger than $\kappa+1$ if $\gamma\left(  t^{p}\right)  $ is close enough
to $R$ and $q_{n}$ is close enough to $\gamma\left(  t^{p}\right)  $, i.e. $n$
is big enough.

\item The plane curves $\partial\left(  J^{-}\left(  q_{n}\right)  \cap
R\right)  \cap P_{n}$ and $H^{+}\left(  S\right)  \cap P_{n}$ are tangent at
$p_{n}^{1}$ and $p_{n}^{2}$, moreover we can apply lemma \ref{LAgetcurvature}
to get a point $r_{n}$ in $H^{+}\left(  S\right)  \cap P_{n}$ where the
curvature of the plane curve $H^{+}\left(  S\right)  \cap P_{n}$ is bigger
than $\kappa+1;$

\item By the convergence $P_{n}\rightarrow P$ and the continuity of the second
order derivatives at $\gamma\left(  t^{-}\right)  $ the plane curve $P\cap
H^{+}\left(  S\right)  $ must have bigger curvature as $\kappa+1$.

\item The ending is the same, since the curve $P\cap H^{+}\left(  S\right)  $
must have a point $x$ in the interior of $P\cap\left(  I^{-}\left(
\gamma\left(  t^{f}\right)  \right)  \cap R\right)  ,$ because the boundary
$P\cap\partial\left(  I^{-}\left(  \gamma\left(  t^{f}\right)  \right)  \cap
R\right)  =P\cap\partial\left(  J^{-}\left(  \gamma\left(  t^{f}\right)
\right)  \cap R\right)  $ has curvature at most $\kappa$ at $\gamma\left(
t^{-}\right)  $ and we will apply remark \ref{LAbigcurvature}. But this would
contradict the achronality of $H^{+}\left(  S\right)  $ since $x,\gamma\left(
t^{f}\right)  \in H^{+}\left(  S\right)  $.
\end{enumerate}

Now we prove the above idea step by step. For the sake of simplicity a
suitable subsequence of $q_{n}$ will be also denoted by $q_{n}$.

Since the statement in theorem \ref{Tmain} is a local property, we will use a
suitable local coordinate system. Let $p\in H$ be a point of the horizon and
$U\subset M$ an open geodetically convex neighbourhood of $p$. Moreover, let
$\varphi:U\rightarrow\mathbb{B}^{n}\subset\mathbb{R}^{n}$ be a smooth
diffeomorphism, like a coordinate chart. \ We can consider the Lorentz metric
on $\varphi\left(  U\right)  =\mathbb{B}^{n}$ defined by $\varphi$. \ If $U$
was small enough than the metric is close to a Minkowski metric. This was step
$1$.

Thus from hereon we will assume that \textbf{we have a Lorentz metric }%
$g$\textbf{\ on }$\mathbb{B}^{n}$\textbf{\ and a horizon }$H$\textbf{\ in
}$\left(  \mathbb{B}^{n},g\right)  $\textbf{.}

We will assume that $U$ is so small, that for every point $p\in U$ the causal
future (resp. past) of $p$ in $\left(  U,g|_{U}\right)  $ which is denoted by
$J^{+}\left(  U,p\right)  $ (resp. $J^{-}\left(  U,p\right)  $) is relatively
closed in $U$ and their interiors are $I^{+}\left(  U,p\right)  $ (resp.
$I^{-}\left(  U,p\right)  $).

Step $2$. in the proof of the main theorem is to show that every horizon
locally coincides with a local part of a Cauchy horizon:

\begin{lemma}
\label{Lmodel}Let $H$ be a horizon, then for every $p\in H$ there is an open
neighbourhood $U_{p}$ and a Cauchy horizon $H^{+}\left(  S\right)  $ such that
$H\cap U_{p}=H^{+}\left(  S\right)  \cap U_{p}$.
\end{lemma}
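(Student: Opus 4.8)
The goal is to realize a neighbourhood of $p$ in $H$ as a piece of a Cauchy horizon $H^+(S)$ for a suitable achronal set $S$. The natural candidate for $S$ is a "slice" of $H$ itself, or rather the past side of such a slice. Concretely, I would work in the ball model $(\mathbb{B}^n,g)$ with $g$ close to Minkowski, pick a spacelike hypersurface $\Sigma$ through a point $q$ lying slightly to the future of $p$ and transverse to $H$, so that $\Sigma\cap H$ is a topological $(n-2)$-sphere-like achronal set separating $\Sigma$ into two open pieces; let $S$ be the piece lying "below" $H$ (on the side into which the generators of $H$ emanate). Shrinking $U_p$ if necessary, I would then show $H\cap U_p=H^+(S)\cap U_p$.

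The main steps are as follows. First I would fix the local geometry: choose $U$ geodesically convex with $J^\pm(U,\cdot)$ relatively closed, choose $q\in I^+(U,p)$ close to $p$, and let $\Sigma$ be a small spacelike disc through $q$ chosen so that every generator of $H$ through points near $p$ crosses $\Sigma$ transversally exactly once; this uses that $H$ is a topological hypersurface with a Lipschitz graph structure and that generators are past-directed null geodesics, so for $U$ small they are uniformly close to straight null lines and meet $\Sigma$ cleanly. Then set $S$ = the open component of $\Sigma\setminus H$ on the causal-past side of $H$, and $\bar S$ its closure. Second, I would prove the inclusion $H\cap U_p\subset H^+(S)$: given $x\in H\cap U_p$, the past generator from $x$ stays on $H$ hence is achronal and never enters $I^-$ of points of $S$; conversely any past-directed inextendible (within $U$) causal curve from a point just to the future of $x$ must cross $\Sigma$, and since it starts above $H$ and $H$ is achronal it is forced through $S$ — this shows such points lie in $D^+(S)$, while $x$ itself is a boundary point, so $x\in\partial D^+(S)$; and $x\notin\bar S$ because $\bar S\subset\Sigma$ while $x$ can be taken off $\Sigma$. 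Third, the reverse inclusion $H^+(S)\cap U_p\subset H$: a point $y\in H^+(S)\cap U_p$ is a limit of points of $D^+(S)$ and a limit of points not in $D^+(S)$; using the characterization of $H^+(S)$ via past-inextendible null generators emanating from $\partial D^+(S)$ (O'Neill), the generator of $H^+(S)$ through $y$, traced toward $\Sigma$, must hit $\partial S=\Sigma\cap H\subset H$; since that null geodesic lies on $H^+(S)$, is achronal, and agrees with a generator of $H$ near the hit point (both are the unique past null geodesic on an achronal hypersurface through that point), $y$ lies on $H$ as well. Finally I would note that all of this is arranged inside $U$, so $H^+(S)$ is taken in the Lorentz manifold $(U\setminus\bar S,g|_U)$ exactly as in the remark following Definition \ref{defCouchy}, and shrink to $U_p$ so both sides agree.

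The step I expect to be the main obstacle is the reverse inclusion, specifically controlling the generators of $H^+(S)$ near $y$ and verifying they actually return to $\partial S$ rather than "escaping" out of the coordinate ball or accumulating on $\bar S$ tangentially. This requires: (i) a uniform lower bound, valid on $U_p$, on how far a generator of $H^+(S)$ extends before meeting $\Sigma$, which one gets from convexity of $U$ plus closeness of $g$ to the Minkowski metric (null geodesics are nearly straight and $\Sigma$ is nearly a flat spacelike slab); and (ii) the matching statement that where a generator of $H^+(S)$ meets $H$, it coincides locally with a generator of $H$ — this is exactly Proposition \ref{Prop_diff=c1} applied on the common points, since on an achronal topological hypersurface the past-directed null geodesic through a given point is unique. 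Assembling (i)–(ii) with Definition \ref{Dhorizon copy(1)}(3) for both $H$ and $H^+(S)$ gives the two-sided equality $H\cap U_p=H^+(S)\cap U_p$, completing the lemma.
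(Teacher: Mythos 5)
There is a genuine gap, and it is an orientation error that breaks the construction at the very first step. You place your spacelike slice $\Sigma$ through a point $q\in I^{+}(U,p)$, i.e.\ to the \emph{future} of $p$, and take $S\subset\Sigma$. But the lemma needs $H$ near $p$ to coincide with a \emph{future} Cauchy horizon: since $D^{+}(S)\subset J^{+}(S)$, we have $H^{+}(S)\subset\overline{D^{+}(S)}\subset\overline{J^{+}(S)}$, so $H^{+}(S)$ cannot contain $p$, which lies strictly to the past of your slice; already $p\in H\cap U_{p}$ fails to be in $H^{+}(S)\cap U_{p}$. The same misorientation invalidates your intermediate claims: a small achronal spacelike disc through $q\in I^{+}(p)$ cannot meet $H$ at points chronologically above $p$ (any such intersection point together with $p$ would violate achronality of $H$), so the picture of $\Sigma\cap H$ separating $\Sigma$ with $S$ "below" $H$ does not come about as described; and for $x\in H\cap U_{p}$, past-directed causal curves from points just to the future of $x$ run away from $\Sigma$, so your argument that such points lie in $D^{+}(S)$ (hence $x\in\partial D^{+}(S)$) does not get started. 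For the construction to have a chance, the auxiliary achronal set must lie to the \emph{past} of the neighbourhood, which is exactly how the paper sets it up: an affine hyperplane $F$ with $U_{p}\cap F=\emptyset$, spacelike near $p$, such that every past-directed inextendible causal curve from the slightly larger neighbourhood $W_{p}$ meets $F\cap V_{p}$.

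A secondary difference that matters even after flipping the orientation: the paper does not take $S$ to be a component of $F\setminus H$, but defines $S=\cup_{q\in\overline{U_{p}}\cap H}I^{-}(q)\cap F$. This choice is what makes the two key steps mechanical: (i) for $q\in\overline{U_{p}}\cap H$ and $q^{-}\in I^{-}(q)\cap W_{p}$ one gets $J^{-}(q^{-})\cap F\subset S$ directly from the definition, hence $q^{-}\in D^{+}(S)$ and $q\in\overline{D^{+}(S)}$; and (ii) the point $f_{\gamma_{q}}$ where a generator through $q$ meets $F$ lies in $\overline{S}$ but not in $S$ (else achronality of $H$ is violated), which is precisely what rules out $q\in \mathrm{int}\,D^{+}(S)$. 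With a "component of $\Sigma\setminus H$" definition you would additionally have to control whether the topological (merely Lipschitz-type) hypersurface $H$ meets your slice in the clean separating fashion you assume, and whether $D^{+}(S)$ is tall enough to reach a full neighbourhood of $p$ — the paper encodes the latter quantitatively in its property that all past-directed inextendible causal curves from $W_{p}$ hit $F\cap V_{p}$. So beyond the fixable bookkeeping, the proposal as written fails because $S$ sits on the wrong side of $p$.
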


\begin{proof}
Remember that by our assumption we must prove that: \textit{if we have a
Lorentz metric }$g$\textit{\ on }$\mathbb{B}^{n}$\textit{\ and a horizon }%
$H$\textit{\ in }$\left(  \mathbb{B}^{n},g\right)  $\textit{, then for every
}$p\in H$\textit{\ there is an open neighbourhood }$U_{p}$\textit{\ and a
Cauchy horizon }$H^{+}\left(  S\right)  $\textit{\ such that }$H\cap
U_{p}=H^{+}\left(  S\right)  \cap U_{p}$\textit{. }

Now we will choose neighbourhoods $W_{p},~V_{p},~U_{p}$ of $p$ and an affine
hyperplane $F\subset\mathbb{B}^{n}$ suitably close to $p$ such that the
following hold:

\begin{enumerate}
\item $U_{p}\subset W_{p}\subset V_{p}$

\item $F\cap V_{p}$ is space-like

\item $\overline{U_{p}}\cap H$ is compact, where $\overline{U_{p}}$ is the
closure of $U_{p}$

\item for every point $q\in W_{p}$ every causal past directed and inextendable
curve from $q$ intersects $F\cap V_{p}$

\item $U_{p}\cap F=\emptyset$
\end{enumerate}

\emph{Note that for every point }$q\in\overline{U_{p}}\cap H$\emph{\ every
generator through }$q$\emph{\ also intersects }$F$\emph{\ }and the
intersection point is in the past of $q.$ Let
\[
S\overset{def}{=}\cup_{q\in\overline{U_{p}}\cap H}I^{-}\left(  q\right)  \cap
F,
\]
which is an open achronal set in $F$, thus, we can consider its Cauchy
development $D^{+}\left(  S\right)  $.

First we show that
\begin{equation}
\overline{U_{p}}\cap H\subset\overline{D^{+}\left(  S\right)  }. \label{Ee}%
\end{equation}
If $q\in\overline{U_{p}}\cap H$ and $q^{-}\in I^{-}\left(  q\right)  \cap
W_{p}$ then $J^{-}\left(  q^{-}\right)  \cap F\subset I^{-}\left(  q\right)
\cap F\subset S$ and by property $4.$we have that $q^{-}\in D^{+}\left(
S\right)  $. As $q^{-}$ can be arbitrary close to $q$ we have $q\in
\overline{D^{+}\left(  S\right)  }$.

Let $q\in\overline{U_{p}}\cap H$ and $\gamma_{q}$ a generator through $q$
(there can be more generators through $q$). Let $f_{\gamma_{q}}%
\overset{def}{=}F\cap\gamma_{q}$. Note that $\gamma_{q}$ can not "return" and
have more than one intersection point with $F$, if $U$ in step $1$ is small
enough. As $f_{\gamma_{q}}\in J^{-}\left(  q\right)  \cap F=\overline
{I^{-}\left(  q\right)  \cap F}$ we have that $I^{-}\left(  q\right)  \cap
F\subset S$ yields%
\begin{equation}
f_{\gamma_{q}}\in\overline{S}. \label{E0}%
\end{equation}

We show that $f_{\gamma_{q}}\notin S$. Assume on the contrary that
$f_{\gamma_{q}}\in S$. Since $\cup_{q\in\overline{U_{p}}\cap H}I^{-}\left(
q\right)  \cap F$ is an open cover of $S$ in $F$ there is a $q^{\ast}%
\in\overline{U_{p}}\cap H$ for which $f_{\gamma_{q}}\in I^{-}\left(  q^{\ast
}\right)  .$ But this would contradict the achronality of $H$ since both
$q^{\ast}$ and $f_{\gamma_{q}}\in\gamma_{q}$ are in $H$. Therefore,
$f_{\gamma_{q}}\notin S$ and by $\left(  \ref{E0}\right)  $ we have that%
\[
f_{\gamma_{q}}\in\partial_{F}S
\]
where the boundary is taken with respect to $F$. We will show that
\begin{equation}
q\in\partial D^{+}\left(  S\right)  . \label{E2}%
\end{equation}
This can be proved by contradiction. If $q\notin\partial D^{+}\left(
S\right)  $ then \ by $\left(  \ref{Ee}\right)  $ we have $q\in intD^{+}%
\left(  S\right)  $. Thus, there would be a point $q^{+}\in I^{+}\left(
q\right)  \cap D^{+}\left(  S\right)  \cap W_{p}$. Therefore $I^{-}\left(
q^{+}\right)  \supset J^{-}\left(  q\right)  $ would hold and imply $\left(
I^{-}\left(  q^{+}\right)  \cap F\right)  \supset\left(  J^{-}\left(
q\right)  \cap F\right)  \ni f_{\gamma_{q}}$. By property $4.$ and by the fact
that $S\subset F$, we have $f_{\gamma_{q}}\in\left(  I^{-}\left(
q^{+}\right)  \cap F\right)  \subset S$, this would contradict that
$f_{\gamma_{q}}\notin S$.

Since $\left(  \overline{U_{p}}\cap H\right)  \cap F=\emptyset$ by property
5., we have that
\[
\overline{U_{p}}\cap H\subset\partial D^{+}\left(  S\right)  -\overline{S}%
\]
which concludes the proof.
\end{proof}

Step $3.$ and $4.$ are put together in the following remark.

\begin{remark}
\label{Rcurvature}Let $\left(  \mathbb{B}^{n},g\right)  $ be a Lorentz
manifold and $V\subset\mathbb{B}^{n}$ a small open set. Let $J^{-}\left(
x,V\right)  $ be the causal past of $x$ in the restricted smaller Lorentz
manifold $\left(  V,g|_{V}\right)  $. Let $R$ be a hyperplane such that $V\cap
R$ is space-like and $x\in V$ a point for which $J^{-}\left(  x,V\right)  \cap
R\subset V$ and $J^{-}\left(  x,V\right)  \cap R\neq\emptyset$. \ Since $R$ is
a hyperplane it can be identified with $\mathbb{R}^{n-1}$. If $x$ is close
enough to $R$ then $\partial\left(  J^{-}\left(  x,V\right)  \cap R\right)  $
is smooth and diffeomorphic to a sphere. Therefore, if we equip $R=\mathbb{R}%
^{n-1}$ with a standard euclidean metric\footnote{We will fix a standart
metric on $R\approx\mathbb{R}^{n-1}$ and the curvatures, orthogonality, will
be taken with respect to this metric throughout this paper.} then at every
point $y\in\partial\left(  J^{-}\left(  x,V\right)  \cap R\right)  ~$we can
take the inward normal vector $n_{y}$. Let $v\in T_{y}\partial\left(
J^{-}\left(  x,V\right)  \cap R\right)  $ be a tangent vector at $y$ and $K$ a
$2$-dimensional plane through $y$ parallel to $n_{y}$ and $v$, i.e. a normal
plane at $y$ in the direction $v$. The intersection $\partial\left(
J^{-}\left(  x,V\right)  \cap R\right)  \cap K$ is a $2$-dimensional curve,
called normal curve in the direction of $v$, which has curvature
$\kappa\left(  y,v\right)  $ with respect to the normal vector $n_{y}$ thus
$\kappa\left(  y,v\right)  $ is the normal curvature of $\partial\left(
J^{-}\left(  x,V\right)  \cap R\right)  $ in the direction of $v$ with respect
to the invard normal direction $n_{y}$.\footnote{This is the classical normal
curvature in differential geometry. The sphere of radius $R$ will have
positive normal curvature $\frac{1}{R}$ in any tangential direction.} \ This
curvature depends on the identification $R\approx\mathbb{R}^{n-1}$ and the
euclidean metric on $\mathbb{R}^{n-1}$. Let
\[
\kappa_{x,R}^{\max}\left(  y\right)  \text{ and }\kappa_{x,R}^{\min}\left(
y\right)
\]
denote the maximum and minimum of the set $\left\{  \kappa\left(  y,v\right)
~|~v\in T_{p}\partial\left(  J^{-}\left(  x,V\right)  \cap F\right)  \right\}
$ of the normal curvatures at $y$. \ By the smoothness and the properties of
the exponential map it is clear that for every $0<\kappa$, if $x$ is close
enough to $F$ then $\kappa<\kappa_{x,R}^{\min}\left(  y\right)  $ for every
$y\in\partial\left(  J^{-}\left(  x,V\right)  \cap R\right)  $.
\end{remark}

Now we can proceed with the other steps, which will yield the proof of theorem
\ref{Tmain}.

\begin{proof}
[Proof of the Main theorem]By lemma \ref{Lmodel} we can assume that we have a
Lorentz manifold $\left(  \mathbb{B}^{n},g\right)  $ a space-like hyperplane
$F$ in it and a set $S\subset F$ such that $H^{+}\left(  S\right)  $ locally
coincides with our horizon at $\gamma\left(  t_{0}\right)  $. Thus it is
enough to prove the theorem for $H^{+}\left(  S\right)  $ at $\gamma\left(
t_{0}\right)  $.

The proof goes by contradiction. The situation is similar to \textsc{figure
\ref{F1}}, but $\mathbb{R}^{2}$ is replaced by $F$, the cones, spheres
(circles) are not real cones and spheres only smooth manifolds close to them.
Assume that $H^{+}\left(  S\right)  $ is only differentiable at every
$\gamma\left(  t\right)  ,~t\in\left(  \alpha,t_{0}\right)  $ and it is at
least of class $C^{2}$ at every $\gamma\left(  t\right)  ,~t\in\left(
t_{0},\beta\right)  $. Let $\gamma\left(  t^{f}\right)  ,~\gamma\left(
t^{p}\right)  ,~\gamma\left(  t^{-}\right)  \in\gamma\cap H^{+}\left(
S\right)  $ such that $t^{f}<t^{p}<t_{0}<t^{-}$ and all these points are close
enough to $t_{0}$. Moreover, let $R$ be a hyperplane parallel to $F$ which
intersects $\gamma$ at $\gamma\left(  t^{-}\right)  $. We can assume that
$\gamma\left(  t^{f}\right)  $ and $R$ are so close that $J^{-}\left(
x\right)  \cap F$ is a smooth embedded manifold and $\gamma\left(
t^{p}\right)  $ is "much closer" to $R$ than $\gamma\left(  t^{f}\right)  $
thus by remark \ref{Rcurvature} (steps $3$. and $4.$) the following hold. Let
$\kappa_{\gamma\left(  t^{f}\right)  ,F}^{\max}\left(  \gamma\left(
t^{-}\right)  \right)  $ be the maximal $2$-dimensional curvature of
$\partial\left(  J^{-}\left(  \gamma\left(  t^{f}\right)  \right)  \cap
R\right)  $ in $R\approx\mathbb{R}^{n-1}$ at $\gamma\left(  t^{-}\right)  $
and $\kappa_{\gamma\left(  t^{p}\right)  .R}^{\min}\left(  \gamma\left(
t^{-}\right)  \right)  $ be the minimal $2$-dimensional curvature of
$\partial\left(  J^{-}\left(  \gamma\left(  t^{p}\right)  \right)  \cap
R\right)  $ in $R\approx\mathbb{R}^{n-1}$ at $y,$ then

\begin{enumerate}
\item[P1] $\partial\left(  J^{-}\left(  \gamma\left(  t^{f}\right)  \right)
\cap R\right)  ,~\partial\left(  J^{-}\left(  \gamma\left(  t^{p}\right)
\right)  \cap R\right)  $ are smooth manifolds in $R\approx\mathbb{R}^{n-1}$;

\item[P2] $0<\kappa_{\gamma\left(  t^{f}\right)  ,R}^{\max}\left(
\gamma\left(  t^{-}\right)  \right)  <\kappa<\kappa+1<\kappa_{\gamma\left(
t^{p}\right)  .R}^{\min}\left(  \gamma\left(  t^{-}\right)  \right)  $ for
some $\kappa\in\mathbb{R}^{+}$;
\end{enumerate}

Since $H^{+}\left(  S\right)  $ is only differentiable at $\gamma\left(
t^{f}\right)  $ but not of class $C^{1}$, by proposition \ref{Prop_diff=c1},
there is a sequence $q_{n}\in H^{+}\left(  S\right)  $ for which
\[
q_{n}\rightarrow\gamma\left(  t^{p}\right)  ,N\left(  q_{n}\right)  \geq2.
\]
Thus there are at least two different past directed generators $\gamma_{n}%
^{1},~\gamma_{n}^{2}$ starting at $q_{n}$. It is easy to see that $\gamma
_{n}^{i},~i=1,2$ must converge\footnote{See chapter 3.3 in \cite{B-E_E} on the
limit of nonspacelike curves.} to $\overrightarrow{\gamma_{p}}\overset{def}{=}%
\gamma|_{\left[  t^{p},\beta\right)  }$, for example there is subsequence of
$\gamma_{n}^{1}$ which locally converges to a past directed light-like
geodesic starting at $\gamma\left(  t^{p}\right)  $ which must lie on
$H^{+}\left(  S\right)  $, but this is unique, as $N\left(  \gamma\left(
t^{p}\right)  \right)  =1$. Therefore, if $p_{n}^{i}\overset{def}{=}\gamma
_{n}^{i}\cap R$ then%
\begin{equation}
p_{n}^{1},~p_{n}^{2}\rightarrow\gamma\left(  t^{-}\right)  \label{E3}%
\end{equation}
as well.

Since $\partial\left(  J^{-}\left(  \gamma\left(  t^{p}\right)  \right)  \cap
R\right)  $ is smooth in $R\approx\mathbb{R}^{n-1}$, let $o$ be a point in the
interior of $J^{-}\left(  \gamma\left(  t^{p}\right)  \right)  \cap R$ such
that the line $\overline{o\gamma\left(  t^{-}\right)  }$ is normal to
$\partial\left(  J^{-}\left(  \gamma\left(  t^{p}\right)  \right)  \cap
R\right)  $ at $\gamma\left(  t^{-}\right)  ,$ i.e. $o$ is on the normal line
of $\partial\left(  J^{-}\left(  \gamma\left(  t^{p}\right)  \right)  \cap
R\right)  $ at $\gamma\left(  t^{-}\right)  $. As $q_{n}\rightarrow
\gamma\left(  t^{p}\right)  $ we have that
\begin{equation}
J^{-}\left(  q_{n}\right)  \cap R\rightarrow J^{-}\left(  \gamma\left(
t^{p}\right)  \right)  \cap R, \label{E4}%
\end{equation}
For step $5.$ consider the $2$-dimensional plane $P_{n}$ spanned by
$o,p_{n}^{1},p_{n}^{2}$ \ which intersects $\partial\left(  J^{-}\left(
q_{n}\right)  \cap R\right)  $ in a smooth curve and $p_{n}^{1},p_{n}^{2}$ are
on this curve\footnote{If $o,p_{n}^{1},p_{n}^{2}$ are collinear for every
$n>N$ for an $N\in\mathbb{N},$ then we can change $o$ to an other point on the
normal line,}. By $\left(  \ref{E3}\right)  $ a suitable subsequent of $P_{n}$
will converge to a normal plane $P$ of $\partial\left(  J^{-}\left(
\gamma\left(  t^{p}\right)  \right)  \cap R\right)  $ at $\gamma\left(
t^{-}\right)  $.

The continuity of the classical $2$-dimensional curvatures, the convergence
(\ref{E4}) and property P2 yield that $\partial\left(  J^{-}\left(
q_{n}\right)  \cap R\right)  $ has $2$-dimensional curvatures bigger than
$\kappa+1$ everywhere, if $n$ is big enough. Consider the plane curve
$\partial\left(  J^{-}\left(  q_{n}\right)  \cap R\right)  \cap P_{n}$ and the
(shortest) part of it between its points $p_{n}^{1}$ and $p_{n}^{2}$, see
\textsc{Figure \ref{f2}}, this curve segment will be denoted by $\phi_{n}$. By
lemma \ref{LMeusnier}, we have that the curvature of $\phi_{n}$ is everywhere
at least that of the minimal $2$-dimensional curvature of $\partial\left(
J^{-}\left(  q_{n}\right)  \cap R\right)  $. By property $P2$ and by the
convergence (\ref{E4}) we have that the curvature of $\phi_{n}$ is at least
$\kappa+1$ at every point of $\phi_{n}$ if $n$ is big enough. This proved step
$6.$%

\begin{figure}[ptb]%
\centering
\includegraphics[
height=4.2241cm,
width=12.8722cm
]%
{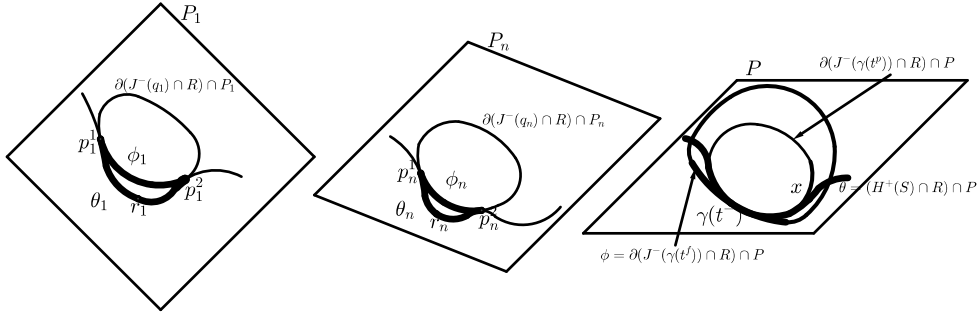}%
\caption{The convergence of "big curvature" points $r_{n}$ give a horizon
point $x$ in $I^{-}\left(  \gamma\left(  t^{f}\right)  \right)  $.}%
\label{f2}%
\end{figure}

Let $\theta_{n}$ be the segment of the curve $\left(  H^{+}\left(  S\right)
\cap R\right)  \cap P_{n}$ between its points $p_{n}^{1}$ and $p_{n}^{2}$
which is twice differentiable if $n$ is big enough, because $H^{+}\left(
S\right)  $ is at least of class $C^{2}$ at $\gamma\left(  t^{-}\right)  $ and
we have (\ref{E3}).

Note that the manifolds $\partial\left(  J^{-}\left(  \gamma\left(
t^{p}\right)  \right)  \cap R\right)  $ and $H^{+}\left(  S\right)  \cap
R=\partial\left(  \overline{D^{+}\left(  S\right)  }\cap R\right)  $ are
tangent at $\gamma\left(  t^{-}\right)  ,$ since $J^{-}\left(  \gamma\left(
t^{p}\right)  \right)  \cap R\subset\overline{D^{+}\left(  S\right)  }\cap R$
and $\gamma\left(  t^{-}\right)  $ is on their boundary an both boundary is at
least of class $C^{1}$. Since the line $\overline{o\gamma\left(  t^{-}\right)
}$ is transversal to the manifolds $\partial\left(  J^{-}\left(  \gamma\left(
t^{p}\right)  \right)  \cap R\right)  $ and $H^{+}\left(  S\right)  \cap R$,
the convergence (\ref{E4}) and a simple continuity argument yield that
$\theta_{n}$ and $\phi_{n}$ can be considered as function graphs in $P_{n}$.
Therefore we can apply lemma \ref{LAgetcurvature} in the appendix to prove
step $7$. Let $r_{n}\in\theta_{n}$ be the point where the formal curvature is
at least the minimal curvature of $\phi_{n}$, see definition \ref{Dref} in the
appendix. Thus, by step $6.$ we have that the curvature of $\theta_{n}$ at
$r_{n}$ is at least $\kappa+1$.

Step $8.$ Since $H^{+}\left(  S\right)  $ is at least of class $C^{2}$ at
$\gamma\left(  t^{-}\right)  $ and $P_{n}\rightarrow P$ we have that the
formal curvature expressions of the curves $\theta_{n}$ at $r_{n}$ will
converge to the curvature of the curve%
\[
\theta\overset{def}{=}\left(  H^{+}\left(  S\right)  \cap R\right)  \cap P
\]
at $\gamma\left(  t^{-}\right)  $. \ This gives that the formal curvature of
$\theta$ at $\gamma\left(  t^{-}\right)  $ is at least $\kappa+1$.

What remains is step $9$. By property P2 we have that the curvature of the
curve $\phi\overset{def}{=}\partial\left(  J^{-}\left(  \gamma\left(
t^{f}\right)  \right)  \cap R\right)  \cap P$ at $\gamma\left(  t^{-}\right)
$ is at most $\kappa$. Now in the plane $P$, we can apply remark
\ref{LAbigcurvature} to $\theta$ and $\phi$ at $\gamma\left(  t^{-}\right)  $
to get a point $x\in\theta\subset H^{+}\left(  S\right)  $ which is "above"
$\phi$, i.e. it must lie in the interior\footnote{We calculated all the
curvatures with respect to the inward directions of the "spheres". Since all
the curvatures are positive, this means that the curves bend to the inward
direction.} of $\left(  J^{-}\left(  \gamma\left(  t^{f}\right)  \right)  \cap
R\right)  \cap P$ which is $\left(  I^{-}\left(  \gamma\left(  t^{f}\right)
\right)  \cap R\right)  \cap P$, because $U\approx\mathbb{R}^{n}$ in step $1$.
was suitably small. But \ then $x$ and $\gamma\left(  t^{f}\right)  $ are
chronological related points on the horizon which contradicts the achronality
of $H^{+}\left(  S\right)  $.
\end{proof}

\begin{corollary}
If $\gamma:[(\alpha,\beta)\rightarrow H$ is a generator and $H$ which is of
class $C^{k}$, $k\geq2$ at an interior point $\gamma\left(  t\right)
,~t\in\left(  \alpha,\beta\right)  $, then $H$ is of class $C^{k}$ at every
point $\gamma\left(  t\right)  ,~t\in\left(  \alpha,\beta\right)  $.
\end{corollary}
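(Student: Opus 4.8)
The plan is to deduce this immediately from Theorem \ref{Tmain} together with Theorem \ref{structural}. Let $\gamma:[(\alpha,\beta)\to H$ be the generator in question, and suppose $H$ is of class $C^{k}$ with $k\geq 2$ at some interior point $\gamma(t_{1})$, $t_{1}\in(\alpha,\beta)$. First I would invoke Theorem \ref{structural} to obtain the unique parameter $t_{0}\in[\alpha,\beta]$ governing the differentiability behaviour along $\gamma$: $H$ is exactly of class $C^{m}$ (for some fixed $m\geq 1$) at every $\gamma(t)$ with $t>t_{0}$, and $H$ is differentiable but not of class $C^{1}$ at every $\gamma(t)$ with $t\in(\alpha,t_{0}]$.

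The key observation is that the point $\gamma(t_{1})$, at which $H$ is $C^{k}$ with $k\geq2$, cannot satisfy $t_{1}\leq t_{0}$; indeed at such points $H$ is, by part (2) of Theorem \ref{structural}, not even of class $C^{1}$. Hence $t_{1}>t_{0}$, so in particular $t_{0}<\beta$. Now there are two cases. If $t_{0}\in(\alpha,\beta)$, then $\gamma(t_{0})$ is the differentiability jumping point of $\gamma$, and Theorem \ref{Tmain} forces the exponent $m$ in Theorem \ref{structural} to equal $1$; but then $H$ is exactly of class $C^{1}$ at $\gamma(t_{1})$, contradicting that it is $C^{k}$ with $k\geq 2$ there. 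Therefore this case cannot occur, and we must have $t_{0}=\alpha$ (the case $t_{0}=\beta$ is excluded since $t_{1}>t_{0}$ and $t_{1}<\beta$). With $t_{0}=\alpha$, part (1) of Theorem \ref{structural} says $H$ is exactly of class $C^{m}$ at every interior point $\gamma(t)$, $t\in(\alpha,\beta)$, for one and the same $m$; evaluating at $t=t_{1}$ gives $m=k$, and hence $H$ is of class $C^{k}$ at every point $\gamma(t)$, $t\in(\alpha,\beta)$, as claimed.

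There is essentially no analytic obstacle here — the entire content has already been absorbed into Theorems \ref{structural} and \ref{Tmain}. The only point requiring a little care is the bookkeeping of the endpoint cases for $t_{0}$ (namely $t_{0}=\alpha$ versus $t_{0}=\beta$ versus $t_{0}$ interior) and the observation that the hypothesis $k\geq 2$ is exactly what rules out both the "jumping point exists" scenario (via Theorem \ref{Tmain}, which would pin the exponent at $1$) and the "left branch" scenario (where $H$ fails to be $C^{1}$). Once those cases are dispatched, the uniformity of the exponent on $(\alpha,\beta)$ in Theorem \ref{structural} delivers the conclusion directly.
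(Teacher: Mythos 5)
Your argument is correct and is exactly the intended derivation: the paper states this corollary without proof precisely because it follows immediately from Theorem \ref{structural} (uniqueness of $t_{0}$ and uniformity of the exact class for $t>t_{0}$) combined with Theorem \ref{Tmain} (a jumping point forces the exponent to be $1$), which is the case analysis you carry out. One negligible quibble: the hypothesis ``of class $C^{k}$'' only gives $m\geq k$ rather than $m=k$ for the exact class $m$, but the conclusion follows just as well.
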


\subsection{Appendix}

In the appendix we collect some technical lemmas. Since their proof is
straightforward we omit some calculations.

Let $M\subset\mathbb{R}^{m}$ be a $1$-codimensional smooth submanifold $p\in
M$, $v\in T_{p}M$, and $n\perp T_{p}M$ a normal vector of $M.$ Let $S\subset
M$ be the $2$-dimensional affine plane at $p$ parallel to $n$ and $v $.
Consider the curve $\theta\overset{def}{=}M\cap S$ as a plane curve in $S $.
Let
\[
\kappa\left(  v,M,n\right)
\]
denote the signed curvature of $\theta$ with respect to the base $v,~n$, which
is called \textbf{the normal curvature in the direction of }$v$\textbf{\ with
respect to }$n$\textbf{. }Note that $n$ show the direction in which the curves
with positive curvature bend.

\begin{lemma}
\label{LMeusnier}Let $M\subset\mathbb{R}^{m}$ be a $1$-codimensional smooth
submanifold and $P\subset\mathbb{R}^{n}$ a $2$-dimensional affine plane.
Assume that $\phi:\left(  -1,1\right)  \rightarrow M\cap P$ is a smooth
parameterization of a small part of the intersection curve $M\cap P$ and
$n\perp T_{\phi\left(  0\right)  }M$ is a normal vector for which
$\kappa\left(  \phi^{\prime}\left(  0\right)  ,M,n\right)  \geq0$. Then the
signed curvature of $\phi\subset P\approx\mathbb{R}^{2}$ as a plane curve at
$\phi^{\prime}\left(  0\right)  $ is at least $\kappa\left(  \phi^{\prime
}\left(  0\right)  ,M,n\right)  $. with respect to the base $\phi^{\prime
}\left(  0\right)  .~n^{p}$ where $n^{p}$ is the parallel part of $n$ to $P$.
\end{lemma}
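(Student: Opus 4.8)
The plan is to prove this as a version of Meusnier's theorem. First I would make three harmless normalizations: reparametrize $\phi$ by arc length (the signed plane curvature with respect to a fixed frame does not depend on the parametrization, only the orientation, which is pinned down by the base $\phi'(0),n^{p}$); assume $|n|=1$ (rescaling $n$ rescales both $\kappa\left(\phi'(0),M,n\right)$ and $n^{p}$ by the same factor, hence rescales the claimed bound consistently); and assume $n^{p}\neq 0$, which is the only case in which ``signed curvature with respect to the base $\phi'(0),n^{p}$'' is meaningful and which is what occurs in the application. Write $p=\phi(0)$, $v=\phi'(0)$, and choose a smooth function $F$ defined near $p$ with $M=F^{-1}(0)$ locally and $\nabla F(p)=\lambda n$ for some $\lambda\neq 0$.

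The key identity is $\langle \phi''(0),n\rangle=\kappa\left(v,M,n\right)$. To get it, differentiate $F(\phi(s))\equiv 0$ twice at $s=0$, obtaining $\langle\nabla F(p),\phi''(0)\rangle=-\operatorname{Hess}F(p)(v,v)$; do the same for the arc-length parametrized normal section $\theta=M\cap\bigl(p+\operatorname{span}(v,n)\bigr)$, which satisfies $\theta(0)=p$, $\theta'(0)=v$, $\theta''(0)\perp v$, and $\theta''(0)\in\operatorname{span}(v,n)$, hence $\theta''(0)=\kappa\left(v,M,n\right)n$ by the definition preceding the lemma. This yields $\lambda\,\kappa\left(v,M,n\right)=-\operatorname{Hess}F(p)(v,v)$, and comparing the two computations gives $\langle\phi''(0),n\rangle=\langle\nabla F(p),\phi''(0)\rangle/\lambda=-\operatorname{Hess}F(p)(v,v)/\lambda=\kappa\left(v,M,n\right)$.

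To finish I would decompose the curvature vector of $\phi$ inside the plane $P$. Since $\phi\subset P$ and $\phi$ is arc-length parametrized, $\phi''(0)$ lies in the direction space $\vec{P}$ of $P$ and is orthogonal to $v$; because $v\in T_{p}M$ and $n\perp T_{p}M$ we have $\langle n,v\rangle=0$, hence $\langle n^{p},v\rangle=0$, so $n^{p}/|n^{p}|$ spans the orthogonal complement of $v$ inside $\vec{P}$. Therefore the signed curvature of $\phi$ with respect to the base $(v,n^{p})$ equals $\langle\phi''(0),n^{p}\rangle/|n^{p}|$, and since $n-n^{p}\perp\vec{P}\ni\phi''(0)$ this equals $\langle\phi''(0),n\rangle/|n^{p}|=\kappa\left(v,M,n\right)/|n^{p}|$. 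As $0<|n^{p}|\leq|n|=1$ and $\kappa\left(v,M,n\right)\geq 0$ by hypothesis, this is at least $\kappa\left(v,M,n\right)$, which is the assertion. The calculation is routine second-derivative bookkeeping; the only delicate point, and hence the main obstacle to a clean write-up, is keeping the sign and normalization conventions for the normal curvature and for the ``signed curvature with respect to a base'' consistent, and checking via the arc-length reduction that $\phi''(0)$ and $\theta''(0)$ carry no tangential component, so that the single scalar identity $\langle\phi''(0),n\rangle=\langle\theta''(0),n\rangle$ already contains all the geometry.
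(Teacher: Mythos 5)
Your proof is correct, and it takes a somewhat different route from the paper's. The paper disposes of this lemma with a two-line sketch: pass to the $3$-dimensional affine subspace $N$ spanned by $P$ and the normal direction $n$ at $\phi(0)$, note that $N\cap M$ is locally a smooth surface in $N\approx\mathbb{R}^{3}$ containing both the plane section $\phi$ and the normal section $\theta$, and invoke the classical Meusnier theorem there. You instead stay in $\mathbb{R}^{m}$ and re-derive the Meusnier relation from scratch: the defining-function computation gives the key scalar identity $\left\langle \phi^{\prime\prime}\left(0\right),n\right\rangle =\left\langle \theta^{\prime\prime}\left(0\right),n\right\rangle =\kappa\left(\phi^{\prime}\left(0\right),M,n\right)$ (both normal components are the second fundamental form evaluated at $v$), and the projection step turns this into $\kappa_{\phi}=\kappa\left(v,M,n\right)/\left\vert n^{p}\right\vert$, which is exactly the classical factor $1/\cos\theta$ with $\cos\theta=\left\vert n^{p}\right\vert\leq1$; the hypothesis $\kappa\left(v,M,n\right)\geq0$ then gives the inequality. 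What your version buys is a self-contained argument that actually fills in what the paper leaves as a sketch (including the sign and orientation bookkeeping, and the identification of the positively oriented unit normal in $P$ with $n^{p}/\left\vert n^{p}\right\vert$), at the cost of being longer than a citation. Two minor remarks, neither a gap: the justification for the normalization $\left\vert n\right\vert=1$ is slightly off --- the signed curvature with respect to a base depends only on the orientation that base induces, so neither side of the claimed inequality changes under positive rescaling of $n$ (rather than both rescaling by the same factor), which makes the normalization harmless in any case; and setting aside $n^{p}=0$ is legitimate, since that is precisely the non-transversal case in which the stated conclusion is not meaningful and which does not occur in the application.
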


The last line gives only that the signed curvature of $\phi$ in $P$ is also positive.

\begin{proof}
(Sketch) Consider the $3$-dimensional affine subspace $N\subset\mathbb{R}^{m}$
spanned by $P$ and the normal direction of $M$ at $\phi\left(  0\right)  $.
Since $N\cap M$ is locally a smooth submanifold in $N\approx\mathbb{R}^{3}$ at
$\phi\left(  0\right)  $ and all our curves lie in $N$, we can apply the
standard Meusnier theorem to $N\cap M$ in $N\approx\mathbb{R}^{3}$ for our curves.
\end{proof}

The curvature of a plane curve is usually defined for $C^{2}$ curves, however
we need it for $C^{1}$ curves which second derivatives exists everywhere, but
it is not necessarily continuous. But we can define a formal signed curvature
also for such curves which has the same geometric properties as the standard
one. For the sake of simplicity we will use function graphs.

\begin{definition}
\label{Dref}Let $f:I\rightarrow\mathbb{R}$ be a function on the interval $I$
which is twice differentiable at every point of $I$. The formal curvature of
the function graph at $t_{0}\in I$ is%
\[
\kappa_{f}\left(  t_{0}\right)  \overset{def}{=}\frac{f^{\prime\prime}\left(
t_{0}\right)  }{\left(  1+f^{\prime}\left(  t_{0}\right)  ^{2}\right)  ^{3/2}%
}.
\]

\end{definition}

The above definition is the usual one, but note that $\kappa\left(  t\right)
$ is not necessarily continuous. There is a geometric interpretation of the
curvature in the $C^{2}$ case which works also in this weaker case.

\begin{definition}
\label{Dcirc}Let $f:I\rightarrow\mathbb{R}$ be a function on the interval $I $
which is twice differentiable at every point of $I$. Let $r\left(
t_{0},t\right)  $ be the signed radius of the circle which is tangent to the
function graph at $t_{0}$ and contains the point $\left(  t,f\left(  t\right)
\right)  $ where the sign is $+$ if the center of the circle is in the
direction $\left(  -f^{\prime}\left(  t_{0}\right)  ,1\right)  $ from $\left(
t_{0},f\left(  t_{0}\right)  \right)  $, see \textsc{figure \ref{F3}.}
\end{definition}

%

\begin{figure}[ptb]%
\centering
\includegraphics[
height=5.3114cm,
width=16.9491cm
]%
{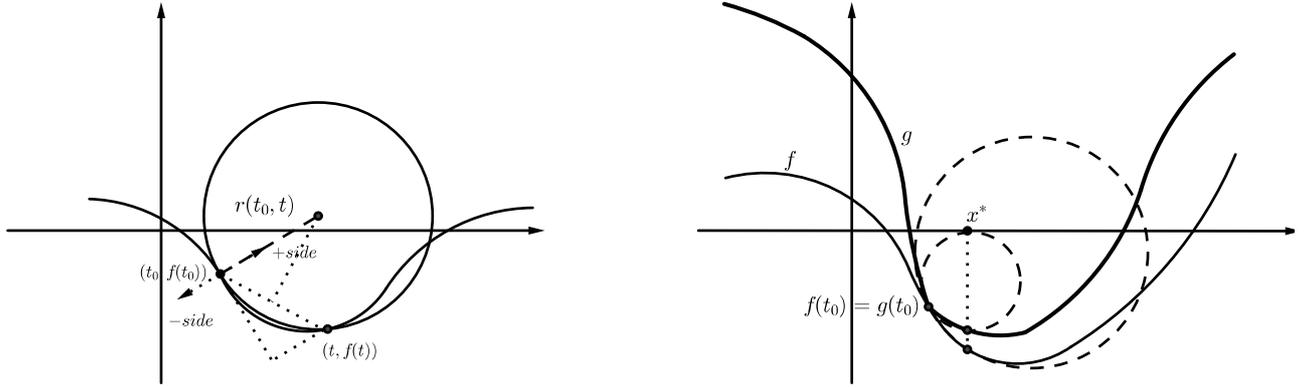}%
\caption{Left side: the circle in definition \ref{Dcirc} and the signs; Right
side: the situation in remark \ref{LAbigcurvature}}%
\label{F3}%
\end{figure}

\begin{lemma}
Let $f:I\rightarrow\mathbb{R}$ be a function on the interval $I$ which is
twice differentiable at every point of $I$. Then%
\[
\lim_{t\rightarrow t_{0}}\frac{1}{r\left(  t_{0},t\right)  }=\kappa_{f}\left(
t_{0}\right)  \,\ \forall t_{0}\in I.
\]
Thus the limit exists and is equal to the formal curvature.
\end{lemma}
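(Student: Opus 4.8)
The plan is to derive an explicit formula for the signed radius $r(t_0,t)$ and then pass to the limit using the second‑order Taylor expansion of $f$ at $t_0$; no appeal to continuity of $f''$ near $t_0$ will be needed, which matches the intended generality. First I would set up the centre of the circle of Definition \ref{Dcirc}. Writing $\nu := (-f'(t_0),1)/\sqrt{1+f'(t_0)^2}$ for the unit normal to the graph at $\big(t_0,f(t_0)\big)$, the centre of that circle lies on the normal line, so it equals $c = \big(t_0,f(t_0)\big) + \rho\,\nu$ for some scalar $\rho$, and the sign convention of Definition \ref{Dcirc} (centre in the direction $(-f'(t_0),1)$ counts as positive) is exactly the statement that $\rho = r(t_0,t)$. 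The circle passes through $\big(t,f(t)\big)$ iff $\big|\,(t,f(t)) - c\,\big|^2 = \rho^2$.

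Expanding this identity and using $|\nu|=1$, the $\rho^2$ terms cancel, leaving
\[
(t-t_0)^2 + \big(f(t)-f(t_0)\big)^2 = \frac{2\rho}{\sqrt{1+f'(t_0)^2}}\Big[\big(f(t)-f(t_0)\big) - f'(t_0)(t-t_0)\Big].
\]
Hence, for $t$ close to $t_0$ with $\big(t,f(t)\big)$ not on the tangent line at $t_0$,
\[
\frac{1}{r(t_0,t)} = \frac{2\big[\big(f(t)-f(t_0)\big) - f'(t_0)(t-t_0)\big]}{\sqrt{1+f'(t_0)^2}\,\big[(t-t_0)^2 + \big(f(t)-f(t_0)\big)^2\big]},
\]
and if $\big(t,f(t)\big)$ does lie on that tangent line the circle degenerates to the line and $1/r(t_0,t)$ is read as $0$, consistently with the right‑hand side above.

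Then I would put $h=t-t_0$ and insert the Taylor expansion $f(t)-f(t_0) = f'(t_0)h + \tfrac12 f''(t_0)h^2 + o(h^2)$, valid since $f$ is twice differentiable at $t_0$. The numerator becomes $f''(t_0)h^2 + o(h^2)$ and the bracket in the denominator becomes $h^2\big(1+f'(t_0)^2\big) + o(h^2)$; dividing top and bottom by $h^2$ and letting $h\to 0$ yields
\[
\lim_{t\to t_0}\frac{1}{r(t_0,t)} = \frac{f''(t_0)}{\big(1+f'(t_0)^2\big)^{3/2}} = \kappa_f(t_0).
\]

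There is no genuine obstacle here; the computation is routine. The only points worth a word of care are: matching the sign convention of Definition \ref{Dcirc} with the parametrization $c=\big(t_0,f(t_0)\big)+\rho\nu$ (done above); the degenerate case where the secant point sits on the tangent line, already absorbed by the formula (and which cannot occur for small $h\neq 0$ when $f''(t_0)\neq 0$); and noting that only $f(t_0)$ and $f'(t_0)$ enter the formula algebraically while the existence of $f''(t_0)$ is used solely to bound the $o(h^2)$ remainder, so continuity of $f''$ is never invoked. Alternatively one could first reduce to $f'(t_0)=0$ by a rotation of the plane, but the direct computation above is cleaner and avoids checking how $\kappa_f$ transforms.
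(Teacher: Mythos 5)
Your proposal is correct, and its geometric core coincides with the paper's: you derive from the tangency condition exactly the explicit expression
\[
\frac{1}{r(t_0,t)}=\frac{2\left[\left(f(t)-f(t_0)\right)-f'(t_0)(t-t_0)\right]}{\sqrt{1+f'(t_0)^2}\left[(t-t_0)^2+\left(f(t)-f(t_0)\right)^2\right]},
\]
which is the formula the paper quotes ``by standard geometrical methods'' (the paper's display writes $r(t_0,t)$ where the right-hand side is in fact $1/r(t_0,t)$, a harmless slip), and then you pass to the limit. Where you genuinely differ is in how the limit is evaluated: the paper invokes L'Hospital's rule twice, while you insert the second-order Taylor expansion with Peano remainder, $f(t)-f(t_0)=f'(t_0)h+\tfrac12 f''(t_0)h^2+o(h^2)$, which is legitimate here since $f'$ exists on all of $I$ and $f''(t_0)$ exists. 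Under the lemma's weak hypothesis your route is actually the cleaner one: $f''$ is only assumed to exist pointwise, so a second application of L'Hospital would bring in $f''(t)$ for $t$ near $t_0$ with no continuity available to conclude; the paper's step can be repaired (apply L'Hospital once and then recognize the difference quotient defining $f''(t_0)$), but your Taylor argument bypasses the issue entirely, and it makes explicit that only the existence of $f''(t_0)$ is used. Your care with the sign convention of Definition \ref{Dcirc} and with the degenerate case where $(t,f(t))$ lies on the tangent line (circle degenerating to the line, read as $1/r=0$) is consistent with the intended meaning and completes the argument.
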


\begin{proof}
By standard geometrical methods indicated on \textsc{figure \ref{F3},} we can
calculate
\[
r\left(  t_{0},t\right)  =\frac{2\left\langle \left(  -f^{\prime}\left(
t_{0}\right)  ,1\right)  ,\left(  \left(  t-t_{0}\right)  ,\left(  f\left(
t\right)  -f\left(  t_{0}\right)  \right)  \right)  \right\rangle \frac
{1}{\sqrt{1+f^{\prime}\left(  t_{0}\right)  ^{2}}}}{\left(  t-t_{0}\right)
^{2}+\left(  f\left(  t\right)  -f\left(  t_{0}\right)  \right)  ^{2}},
\]
where $\left\langle .,.\right\rangle $ is the usual inner product. Since the
L'Hospital rule can be applied twice to $\lim_{t\rightarrow t_{0}}\frac
{1}{r\left(  t_{0},t\right)  }$ a standard calculation shows, that the limit
exists and is equal to $\kappa_{f}\left(  t_{0}\right)  $.
\end{proof}

Now if two function graphs are tangent at some point, the above geometric
interpretation of the curvature yields:

\begin{remark}
\label{LAbigcurvature}Let $f,~g:I\rightarrow\mathbb{R}$ be functions on the
interval $I$ which are twice differentiable at every point of $I$ and tangent
at $t_{0}\in I$. If $\kappa_{f}\left(  t_{0}\right)  <\kappa_{g}\left(
t_{0}\right)  $ then there is a $t^{\ast}\in I$ close enough to $t_{0}$ for
which $f\left(  t^{\ast}\right)  <g\left(  t^{\ast}\right)  $, see
\textsc{figure \ref{F3}.}
\end{remark}

\begin{lemma}
\label{LAgetcurvature}Let $f,~g:\left[  \alpha,\beta\right]  \rightarrow
\mathbb{R}$ be functions which are twice differentiable at every point of
$\left[  \alpha,\beta\right]  $ and their function graphs are tangent at
$\alpha$ and $\beta$. Assume that $g\left(  t\right)  \leq f\left(  t\right)
,~\forall t\in\left[  \alpha,\beta\right]  $. Then there is a $t^{\ast}%
\in\left[  \alpha,\beta\right]  $ for which $\min_{t\in\left[  \alpha
,\beta\right]  }\kappa_{f}\left(  t\right)  \leq\kappa_{g}\left(  t^{\ast
}\right)  $.
\end{lemma}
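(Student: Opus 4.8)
The plan is to reduce the statement to a clean application of the mean value
theorem for second derivatives, after first reducing to the case where $f$ and
$g$ are tangent to the common chord. Set $h\overset{def}{=}f-g$, so
$h\geq 0$ on $[\alpha,\beta]$ and $h(\alpha)=h(\beta)=0$, $h'(\alpha)=h'(\beta)=0$
since the graphs are tangent at both endpoints. If $h\equiv 0$ then $f=g$ and the
statement is trivial (take $t^{\ast}$ to be any minimizer of $\kappa_f$), so assume
$h$ is not identically zero. First I would dispose of the degenerate slope case: if
$f'$ is bounded on $[\alpha,\beta]$ (which it is, being a derivative that exists
everywhere and is, in fact, continuous away from at most the analogue of a jumping
point — but here we only need boundedness, which follows since $f'$ is a derivative
of a differentiable function on a compact interval and hence Darboux, and the
endpoints pin it down) then the denominator $(1+f'^2)^{3/2}$ is bounded above by a
constant $C\geq 1$, and likewise for $g$. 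So it suffices to produce $t^{\ast}$ with
$\min_t f''(t)\,\big/\,C \le g''(t^{\ast})\big/\big(1+g'(t^{\ast})^2\big)^{3/2}$ — actually
cleaner: I will produce $t^{\ast}$ with $f''(t^{\ast\ast})\le g''(t^{\ast})$ for
suitable points and then massage the denominators. Let me instead aim directly at the
sign/magnitude comparison that the curvature encodes.

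The key step is: \emph{there is a point $t^{\ast}\in(\alpha,\beta)$ at which
$g''(t^{\ast})\ge 0$ and $f''(t^{\ast}) \le g''(t^{\ast})$ while simultaneously
$f'(t^{\ast})=g'(t^{\ast})$} — this last equality is what lets the denominators
match. To get it, apply Rolle's theorem to $h$ on $[\alpha,\beta]$: since
$h(\alpha)=h(\beta)=0$ there is an interior critical point, and since $h\ge 0$ and $h$
is not identically zero, $h$ attains a positive maximum at some $t^{\ast}\in(\alpha,\beta)$.
At such an interior maximum $h'(t^{\ast})=0$, i.e. $f'(t^{\ast})=g'(t^{\ast})$, and
$h''(t^{\ast})\le 0$ whenever $h''(t^{\ast})$ exists — which it does, everywhere, by
hypothesis. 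Hence $f''(t^{\ast})\le g''(t^{\ast})$. Because $f'(t^{\ast})=g'(t^{\ast})$
the two curvature denominators at $t^{\ast}$ coincide, so
\[
\kappa_g(t^{\ast})=\frac{g''(t^{\ast})}{(1+g'(t^{\ast})^2)^{3/2}}
\ge \frac{f''(t^{\ast})}{(1+f'(t^{\ast})^2)^{3/2}}=\kappa_f(t^{\ast})
\ge \min_{t\in[\alpha,\beta]}\kappa_f(t),
\]
which is exactly the claim.

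The main obstacle, and the only place any care is needed, is the existence of a
\emph{positive interior maximum} of $h$: if the maximum of $h$ were attained only at
the endpoints, the argument would collapse. But $h(\alpha)=h(\beta)=0$ and $h\ge0$ with
$h\not\equiv0$ force the maximum to be positive and hence attained at an interior point,
so this is not actually an obstacle once one notices it; the tangency at \emph{both}
endpoints is not even needed for this step, only that $h$ vanishes there. Tangency at the
endpoints would be the relevant hypothesis if one instead wanted the comparison to hold
\emph{at} an endpoint, but here the interior-maximum route is cleaner and avoids any
boundary subtlety. One small technical remark to include: $h''$ exists everywhere by
hypothesis (it equals $f''-g''$), so "$h$ has an interior maximum $\Rightarrow h''\le 0$
there" is the elementary second-derivative test, valid without any continuity assumption
on $h''$. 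That completes the proof. $\blacksquare$
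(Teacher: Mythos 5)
Your proof is correct and is essentially the paper's own argument: the positive interior maximum of $h=f-g$ that you exploit is exactly the touching point the paper produces by sliding $f$ down by the largest constant $C$ with $f-C$ and $g$ still meeting, and at that point both arguments extract $\kappa_{f}\left(t^{\ast}\right)\leq\kappa_{g}\left(t^{\ast}\right)$ from tangency --- you do it directly via the second-derivative test together with the matching denominators coming from $f'\left(t^{\ast}\right)=g'\left(t^{\ast}\right)$, where the paper instead cites remark \ref{LAbigcurvature}. The unproved side claim that $g''\left(t^{\ast}\right)\geq0$ and the abandoned detour about boundedness of $f'$ are never used, so they do not affect the validity of the argument.
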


\begin{proof}
Let $0<C$ be the biggest value for which the function graphs of $f-C$ and $g$
have a common point. At this point the graphs must be tangent. Let $t^{\ast} $
be the parameter at which they are tangent. Since $f\left(  t\right)  -C\leq
g\left(  t\right)  $ locally at $t^{\ast}$. We must have $\kappa_{f}\left(
t^{\ast}\right)  =\kappa_{f-C}\left(  t^{\ast}\right)  \leq\kappa_{g}\left(
t^{\ast}\right)  $ by remark \ref{LAbigcurvature}.
\end{proof}

\begin{acknowledgement}
This work was supported by NKFI- (OTKA) Grant nr. K-128862 and Application
Domain Specific Highly Reliable IT Solutions\textquotedblright\ project has
been implemented with the support provided from the National Research,
Development and Innovation Fund of Hungary, financed under the Thematic
Excellence Programme TKP2020-NKA-06 (National Challenges Subprogramme) funding scheme.
\end{acknowledgement}


\begin{thebibliography}{9}                                                                                                %
\bibitem[1]{B-E_E}J. K. Beem, P. E. Ehrlich and K. L. Easley, Global Lorentz
Geometry, \textit{Marcel. Dekker Inc.}, New York, (1996)

\bibitem[2]{B-K}J. Beem and A. Kr\'{o}lak, Cauchy horizon endpoints and
differentiability, \textit{J. Math. Phys.} \textbf{39}, (1998) pp. 6001-6010

\bibitem[3]{C}P.T. Chru\'{s}ciel, A remark on differentiability of Cauchy
horizons, \textit{Classical Quantum Gravity} \textbf{15}, (1998) pp. 3845-3848

\bibitem[4]{O'N}B. O'Neil, Semi-Riemannian Geometry, \textit{Academic Press} (1983)

\bibitem[5]{Sz}D. Szeghy, On the differentiability order of horizons,
\textit{Classical Quantum Gravity} vol. 33, nr. 11 (2016) paper 125003.
\end{thebibliography}
\end{document}